\documentclass[11pt, reqno]{amsart}

\usepackage{amsmath,amssymb,amsthm, epsfig}
\usepackage{hyperref}
\usepackage{amsmath}
\usepackage{amssymb}
\usepackage{color}
\usepackage{ulem}
\usepackage{epsfig}
\usepackage[mathscr]{eucal}
\usepackage[latin1]{inputenc}


\newtheorem{theorem}{Theorem}
\newtheorem{definition}{Definition}

\newtheorem{lemma}{Lemma}

\newtheorem{corollary}{Corollary}
\newtheorem{remark}{Remark}

\date{}
\numberwithin{equation}{section}
\numberwithin{theorem}{section}
\numberwithin{lemma}{section}
\numberwithin{corollary}{section}
\numberwithin{remark}{section} 
\numberwithin{proposition}{section}
\numberwithin{definition}{section}

\def \N {\mathbb{N}}
\def \R {\mathbb{R}}

\def \dist {\mathrm{dist}}

\begin{document}

\title[Reaction-diffusion equations for the infinity Laplacian]{Reaction-diffusion equations\\ for the infinity Laplacian}

\author[N.M.L. Diehl]{Nicolau M.L. Diehl}
\address{Instituto Federal de Educa\c c\~ao,  Ci\^encia e Tecnologia do Rio Grande do Sul, Canoas, Brazil}
\email{nicolau.diehl@canoas.ifrs.edu.br}

\author[R. Teymurazyan]{Rafayel Teymurazyan}
\address{CMUC, Department of Mathematics, University of Coimbra, 3001-501 Coimbra, Portugal}
\email{rafayel@utexas.edu}

\begin{abstract}
We derive sharp regularity for viscosity solutions of an inhomogeneous infinity Laplace equation across the free boundary, when the right hand side does not change sign and satisfies a certain growth condition. We prove geometric regularity estimates for solutions and conclude that once the source term is comparable to a homogeneous function, then the free boundary is a porous set and hence, has zero Lebesgue measure. Additionally, we derive a Liouville type theorem. When near the origin the right hand side grows not faster than third degree homogeneous function, we show that if a non-negative viscosity solution vanishes at a point, then it has to vanish everywhere.
\bigskip

\noindent \textbf{Keywords:} Infinity Laplacian, regularity, dead-core problems, porosity.\\
\noindent \textbf{AMS Subject Classification (2010):} 35B09, 35B53, 35B65, 35R35.

\end{abstract}

\maketitle

\section{Introduction}\label{s1}
Reaction-diffusion equations arise naturally when modeling certain phenomena in biological, chemical and physical systems. In this paper we study reaction-diffusion equations for infinity Laplacian, which despite being too degenerate to realistically represent a physical diffusion process, has been studied in the framework of optimization and free boundary problems (see, for example, \cite{ATU19}, \cite{RST17}, \cite{RT12,RTU15,TU17}, just to cite a few). More precisely, we establish regularity and geometric properties of solutions of the problem
\begin{equation}\label{1.1}
\Delta_\infty u=f(u) \quad \text{in} \quad \Omega,
\end{equation}
where $\Omega\subset\R^n$, $f\in C(\R_+)$ and
\begin{equation}\label{1.2}
0\le f(\delta t)\leq M\delta^\gamma f(t),
\end{equation}
with $M>0$, $\gamma\in[0,3)$, $t>0$ bounded, and $\delta>0$ small enough. Additionally, we assume that 
\begin{equation}\label{comparison}
f \text{ is non-decreasing }.
\end{equation}
Here, $\R_+$ is the set of non-negative numbers, and the infinity Laplacian is defined as follows:
$$
\Delta_\infty u(x):=\sum_{i,j=1}^nu_{x_i}u_{x_j}u_{x_ix_j},
$$
with $u_{x_i}=\partial u/\partial x_i$. Note that the continuity of $f$ provides that $f(u)$ is bounded once $u$ is bounded. Note also that \eqref{1.2} is quite general in the sense that it needs to hold only for $\delta$ close to zero. For example, it holds for functions that are homogeneous of degree $\gamma$. Condition \eqref{comparison} is needed to guarantee the comparison principle. Solutions of \eqref{1.1} are understood in the viscosity sense according to the following definition:
\begin{definition}\label{d1.1}
	A function $u\in C(\Omega)$ is called a viscosity super-solution (resp. sub-solution) of \eqref{1.1}, and written as $\Delta_\infty u\le f(u)$ (resp. $\ge$), if for every $\phi\in C^2(\Omega)$ such that $u-\phi$ has a local minimum at $x_0\in\Omega$, with $\phi(x_0)=u(x_0)$, we have
	$$
	\Delta_\infty\phi(x_0)\leq f(\phi(x_0)).\quad \textrm{(resp. $\geq$)}
	$$
	A function $u$ is called a viscosity solution if it is both a viscosity super-solution and a viscosity sub-solution.
\end{definition}

The infinity Laplace operator is related to the absolutely minimizing Lipschitz extension problem: for a given Lipschitz function on the boundary of a bounded domain, find its extension inside the domain in a way that has the minimal Lipschitz constant, \cite{A67}. It is known (see \cite{J93}) that such function $u$ has to be an infinity harmonic one, i.e. $\Delta_\infty u=0$ (in the viscosity sense). The regularity issue of infinity harmonic functions received extensive attention over the years. As was shown in \cite{ES08}, the infinity harmonic functions in the plane are $C^{1,\alpha}$, for a small $\alpha$ (it is conjectured that the optimal regularity is $C^{1,\frac{1}{3}}$). In higher dimensions infinity harmonic functions are known to be everywhere differentiable (see \cite{ES11}). 

As for the inhomogeneous case of $\Delta_\infty u=f$, it is known that the Dirichlet problem has a unique viscosity solution, provided $f$ does not change sign (see \cite{LW08}). Moreover, as was shown in \cite{L14}, for bounded right hand side, the Lipschitz estimate and everywhere differentiability of solutions remain true. The case of $f$ not being bounded away from zero, mainly, when $f=u_+^\gamma$, where $u_+:=\max\left(u,0\right)$ and $\gamma\in[0,3)$ is a constant, was studied in \cite{ALT16} (dead-core problem). The authors show that for such right hand side (strong absorbtion) across the free boundary $\partial\{u>0\}$ non-negative viscosity solutions are of class $C^{\frac{4}{3-\gamma}}$. The denominator $3-\gamma$ is related to the degree of homogeneity of the operator, which is three, i.e., $\Delta_\infty(Cu)=C^3\Delta_\infty u$, for any constant $C$. Note that for $\gamma\in(0,3)$ this regularity is more than the conjectured $C^{1,\frac{1}{3}}$, i.e., we obtain higher regularity across the free boundary. This result allows to establish Hausdorff dimension estimate for the free boundary $\partial\{u>0\}$ and conclude that it has Lebesgue measure zero. 

We extend these results for the source term $f$ satisfying \eqref{1.2}. In particular, it includes equations with the right hand side 
$$
f(t)=e^t-1\,\,\,\textrm{ and }\,\,\,f(t)=\log(t^2+1)
$$
among others (see Section \ref{s7} for more examples). In fact, our results are true in a broader context, when allowing ``coefficients'' in the right hand side, that is, when in \eqref{1.1} one has $f=f(x,u)$, as long as $f(x,u)$ satisfies \eqref{1.2} as a function of $u$ and is continuous (and bounded) as a function of $x$ (see Section \ref{s7}). For simplicity, we restrict ourselves to the case of $f(x,u)=f(u)$.

Our strategy is the following: by means of a flattening argument, we show that across the free boundary $\partial\{u>0\}\cap\Omega$ non-negative viscosity solutions of \eqref{1.1} are of class $C^{\frac{4}{3-\gamma}}$, when \eqref{1.2} holds. When the source term is comparable to a homogeneous function of degree $\gamma$, this result is sharp in the sense that across the free boundary non-negative viscosity solutions grow exactly as $r^{\frac{4}{3-\gamma}}$ in the ball of radius $r$. We also analyze the borderline (critical) case, that is, when $\gamma=3$ (which is also the degree of the homogeneity of the infinity Laplacian). Unlike \cite{ALT16}, $f$ is not given explicitly, which makes it harder to construct a barrier function - needed for our analysis. Nevertheless, we are able to show that in this case \eqref{1.1} has a viscosity sub-solution whose gradient has modulus separated from zero. We use this function to build up a suitable barrier to conclude that if a viscosity solution vanishes at a point, it has to vanish everywhere. Our results remain true when the right hand side has some ``bounded coefficients'' (see Remark \ref{r7.1}). For simplicity we restrict ourselves with the right hand side ``without coefficients''.

The paper is organized as follows: in Section \ref{s2}, we prove an auxiliary result (flattening solutions) (Lemma \ref{l2.2}), which we use in Section \ref{s3} to derive the main regularity result (Theorem \ref{t3.1}), and as a consequence, in Section \ref{Liouville}, we obtain Liouville type theorems (Theorem \ref{t3.2} and Theorem \ref{t3.3}). In Section \ref{s4}, we prove several geometric measure estimates (Theorem \ref{t4.1} (non-degeneracy) and Corollary \ref{c4.1} (porosity)), and conclude that the free boundary has Lebesgue measure zero (Corollary \ref{c4.2}). In Section \ref{s5}, when $\gamma=3$, we show that the only non-negative viscosity solution that has zero, is the function that is identically zero (Theorem \ref{t5.1}). Finally, in Section \ref{s7} we bring some examples of source terms for which our results are true.

\section{Preliminaries}\label{s2}
In this section we list some preliminaries, as well as prove an auxiliary lemma for future reference. We start by the comparison principle, the proof of which can be found in \cite{CIL92,LW08}.
\begin{lemma}\label{l2.1}
	Let $u$, $v\in C(\overline{\Omega})$ be such that
	$$
    \Delta_\infty u-f(u)\le0, \,\,\,\Delta_\infty v-f(v)\ge0\,\,\textrm{ in }\,\,\Omega
	$$
	in the viscosity sense, and $f$ satisfy \eqref{comparison} or $\inf f>0$. If $u\geq v$ on $\partial\Omega$, then $u\geq v$ in $\Omega$.
\end{lemma}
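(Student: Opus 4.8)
The plan is to argue by contradiction, via the classical doubling of variables technique. Suppose $\theta:=\sup_{\overline\Omega}(v-u)>0$; since $v\le u$ on $\partial\Omega$, the supremum is attained at an interior point $\bar x\in\Omega$. For $j\in\N$ set
$$
\Phi_j(x,y):=v(x)-u(y)-\frac{j}{2}|x-y|^2,
$$
and let $(x_j,y_j)$ be a maximum point of $\Phi_j$ over $\overline\Omega\times\overline\Omega$. The standard estimates give $j|x_j-y_j|^2\to 0$, $x_j,y_j\to\bar x$, and, for $j$ large, $x_j,y_j$ lie in the interior of $\Omega$ with $v(x_j)-u(y_j)\to\theta>0$.

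Next I would invoke the Crandall--Ishii lemma (theorem of sums, \cite{CIL92}) to obtain symmetric matrices $X_j\le Y_j$ with $(p_j,X_j)\in\overline{J}^{2,+}v(x_j)$ and $(p_j,Y_j)\in\overline{J}^{2,-}u(y_j)$, where $p_j:=j(x_j-y_j)$. Testing the sub-solution inequality for $v$ and the super-solution inequality for $u$ against $p_j$, and using $X_j\le Y_j$ for the middle step, yields
$$
f(v(x_j))\le\langle X_jp_j,p_j\rangle\le\langle Y_jp_j,p_j\rangle\le f(u(y_j)).
$$
Since $v(x_j)-u(y_j)\to\theta>0$, for $j$ large $v(x_j)>u(y_j)$, so by \eqref{comparison} we have $f(v(x_j))\ge f(u(y_j))$; combined with the display this forces equality and not yet a contradiction. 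To actually close the argument one needs one of these inequalities to be \emph{strict}, and this is where the degeneracy of $\Delta_\infty$ makes the proof delicate — with $f\equiv 0$, for instance, the chain above is an identity, yet the comparison principle is still true (Jensen's theorem).

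The remedy, which is the only real obstacle, is a preliminary reduction to the case in which $v$ is a \emph{strict} sub-solution, say $\Delta_\infty v\ge f(v)+\sigma$ for some $\sigma>0$ (then the case $p_j=0$ contradicts $f\ge 0$, and the case $p_j\neq 0$ contradicts the monotonicity, respectively the bound $\inf f>0$, exactly as above). Such a reduction can be carried out either by regularizing $v$ through sup-convolution, which makes it semiconvex — so that Jensen's lemma applies — while keeping it a sub-solution of a slightly perturbed equation, or by adding to $v$ a small multiple of an explicit strict sub-solution: for $z_0\notin\overline\Omega$ the function $x\mapsto|x-z_0|^{4/3}$ is smooth on $\overline\Omega$ and satisfies $\Delta_\infty|x-z_0|^{4/3}=\tfrac{64}{81}>0$ classically, so that $v_\e:=v+\e\bigl(|x-z_0|^{4/3}-C_\Omega\bigr)$, after the $O(\e)$ cross terms are absorbed using the local Lipschitz bound for $v$, becomes a strict sub-solution with boundary data $\le u$. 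Running the doubling argument with $v_\e$ and letting $\e\to 0$ gives $u\ge v$ in $\Omega$; the full details are those of \cite{CIL92,LW08}.
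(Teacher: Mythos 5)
The paper does not actually prove this lemma; it simply cites \cite{CIL92,LW08}, so the comparison is between your sketch and the standard arguments in those references. Your outline (doubling of variables, theorem of sums, and the honest observation that with $f$ merely non-decreasing the chain $f(v(x_j))\le\langle X_jp_j,p_j\rangle\le\langle Y_jp_j,p_j\rangle\le f(u(y_j))$ closes to an equality rather than a contradiction) is the right diagnosis: the whole difficulty is to reduce to a \emph{strict} sub-solution, and if such a $v_\varepsilon$ were available your argument would indeed finish. The gap is that your proposed reduction does not produce one. The infinity Laplacian is quadratic in the gradient, so it is not additive: if $\phi$ touches $v_\varepsilon:=v+\varepsilon\bigl(|x-z_0|^{4/3}-C_\Omega\bigr)$ from above at $x_0$, then $\psi:=\phi-\varepsilon(|x-z_0|^{4/3}-C_\Omega)$ touches $v$, and
$$
\Delta_\infty\phi=\Delta_\infty\psi+\varepsilon\,\Delta_\infty\bigl(|x-z_0|^{4/3}\bigr)+2\varepsilon\langle D^2\psi\,D\psi,Dw\rangle+\cdots,
$$
where the cross terms involve $D^2\psi$, i.e.\ the Hessian of the \emph{test function} (equivalently, the matrices $X_j$ from the theorem of sums, which are only $O(j)$), not any quantity controlled by the Lipschitz constant of $v$. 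These terms are not $O(\varepsilon)$ uniformly in $j$, so ``absorbing the $O(\varepsilon)$ cross terms using the local Lipschitz bound for $v$'' fails; this is precisely why even the case $f\equiv0$ (Jensen's theorem \cite{J93}) is a nontrivial result and cannot be obtained by adding a smooth strict sub-solution. Likewise, sup-convolution gives semiconvexity (so Jensen's lemma applies) but by itself creates no strictness in a zeroth-order term that is only non-decreasing, and in the case $\inf f>0$ the bound $\langle Y_jp_j,p_j\rangle\le f(u(y_j))$ is not by itself contradictory, so ``exactly as above'' does not close that case either.

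A correct route, and essentially what the cited references do, is different in substance: either run Jensen's approximation by the auxiliary obstacle-type equations with gradient bounded away from zero, or (closer to the present setting) note that on the open set $\{v>u\}\Subset\Omega$ monotonicity \eqref{comparison} makes $v$ a sub-solution and $u$ a super-solution of the \emph{same} inhomogeneous equation $\Delta_\infty w=g(x)$ with $g:=f(v)\ge0$, and then invoke the comparison theory of \cite{LW08} for right-hand sides of constant sign (this is where the alternative hypothesis $\inf f>0$, or $f(0)=0$ with monotonicity, enters). So your proposal correctly isolates the obstacle but does not overcome it; as written it proves the lemma only under an unjustified claim that a strict sub-solution perturbation exists.
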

The comparison principle, together with Perron's method leads to the following result (for the proof we refer the reader to \cite{CIL92}, for example). In fact, existence of solutions can be shown even without directly applying the comparison principle, as it was done, for example, in \cite[Theorem 3.1]{RTU19}.
\begin{theorem}\label{t2.1}
	If $\Omega\subset\R^n$ is bounded and $\varphi\in C(\partial\Omega)$ is a non-negative function, then there is a unique and non-negative function $u$ that solves the Dirichlet problem
	\begin{equation}\label{2.1}
	\left\{
	\begin{aligned}
		\Delta_\infty u&=f(u) \text { in } \Omega, \\
		u&=\varphi\text { on } \partial\Omega
	\end{aligned}\right.
	\end{equation}
in the viscosity sense.	
\end{theorem}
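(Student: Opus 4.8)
The plan is to obtain uniqueness directly from the comparison principle of Lemma \ref{l2.1} and existence by Perron's method in the viscosity framework of \cite{CIL92}, the only structural inputs being that $\Delta_\infty$ is degenerate elliptic and that $f$ is non-decreasing by \eqref{comparison}. For uniqueness, if $u_1,u_2\in C(\overline\Omega)$ both solve \eqref{2.1}, then $u_1$ is a viscosity super-solution and $u_2$ a viscosity sub-solution in the sense of Lemma \ref{l2.1} with $u_1=\varphi=u_2$ on $\partial\Omega$, so $u_1\ge u_2$ in $\Omega$; exchanging the roles of $u_1$ and $u_2$ gives $u_1=u_2$.

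For existence I would run Perron's method over the class
\[
\mathcal S:=\Big\{w\in C(\Omega):\ w\ge0,\ \Delta_\infty w\ge f(w)\ \text{in}\ \Omega,\ \limsup_{x\to y}w(x)\le\varphi(y)\ \text{for all}\ y\in\partial\Omega\Big\}
\]
and put $u:=\sup_{w\in\mathcal S}w$. The class $\mathcal S$ is nonempty because $f\ge0$ makes the constant $0$ an element whenever $f(0)=0$, and $f(0)=0$ is automatic when $\gamma>0$ (let $\delta\to0$ in \eqref{1.2}); moreover, since $f\ge0$ every $w\in\mathcal S$ is infinity subharmonic, so the maximum principle gives $0\le w\le\|\varphi\|_{L^\infty(\partial\Omega)}$ in $\Omega$, whence $u$ is finite and non-negative. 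The two standard steps of Perron's method now apply: the upper semicontinuous envelope $u^{*}$ is a viscosity sub-solution by closedness of sub-solutions under suprema of locally uniformly bounded families, and the lower semicontinuous envelope $u_{*}$ is a viscosity super-solution, for otherwise the super-solution inequality fails at an interior point and one bumps $u$ up locally there (using degenerate ellipticity and continuity of $f$) to manufacture a member of $\mathcal S$ that exceeds $u$ at that point, contradicting maximality. Once the boundary values are identified, Lemma \ref{l2.1} applied to $u^{*}$ and $u_{*}$ forces $u^{*}\le u_{*}$, so $u^{*}=u_{*}=u\in C(\overline\Omega)$ is a viscosity solution; it is non-negative, which when $f(0)=0$ alternatively follows by comparing $u$ with the solution $v\equiv0$ through Lemma \ref{l2.1}.

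It remains to show $u=\varphi$ on $\partial\Omega$, the delicate point. Fixing $y\in\partial\Omega$ and a concave modulus of continuity $\omega$ of $\varphi$, one builds radial barriers $\varphi(y)\pm\Phi_y(|x-y|)$ on a small neighborhood of $y$ in $\Omega$: taking $\Phi_y$ concave, increasing and $\ge\omega$ makes $\varphi(y)+\Phi_y(|x-y|)$ infinity superharmonic, hence a local super-solution because $f\ge0$, while $\varphi(y)-\Phi_y(|x-y|)$ becomes a local sub-solution once $\Phi_y$ carries enough curvature for $-(\Phi_y')^2\Phi_y''$ to dominate $\sup_{[0,\|\varphi\|_{L^\infty}]}f$ near $y$ (recall $\Delta_\infty$ of a radial function $G(|x-y|)$ equals $(G')^2G''$); both barriers equal $\varphi(y)$ at $y$ and are correctly ordered with $\varphi$ on $\partial\Omega$ near $y$. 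Inserting the lower barrier into $\mathcal S$ and comparing $u$ with the upper barrier via Lemma \ref{l2.1} yields $\varphi(y)\le\liminf_{x\to y}u(x)\le\limsup_{x\to y}u(x)\le\varphi(y)$ for every $y\in\partial\Omega$, where at points with $\varphi(y)=0$ the lower bound is free from $u\ge0$. Hence $u\in C(\overline\Omega)$ with $u=\varphi$ on $\partial\Omega$, which together with the uniqueness already established completes the proof.

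I expect the boundary construction to be the main obstacle: for the bare infinity Laplacian the canonical barriers are cones, which are infinity harmonic and therefore blind to the right-hand side, so one must pin down the correct radial profile -- concave, with curvature tuned to the bound on $f$ -- and verify the barrier inequalities in the viscosity sense for a merely continuous $\varphi$ on an arbitrary bounded $\Omega$. A secondary point is non-negativity when $f(0)>0$ (compatible with \eqref{1.2} only if $\gamma=0$): there one must first exhibit a non-negative sub-solution to guarantee $\mathcal S\neq\emptyset$, after which the argument above goes through unchanged.
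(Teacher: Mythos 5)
Your proposal takes exactly the route the paper itself indicates for Theorem \ref{t2.1} -- uniqueness from the comparison principle (Lemma \ref{l2.1}) and existence via Perron's method with boundary barriers -- which the paper does not write out but delegates to \cite{CIL92} (with \cite{LW08} and \cite{RTU19} for the inhomogeneous infinity Laplacian specifically); your sketch, including the cone/concave radial barriers tuned to the bound on $f$ and the observation that $f(0)=0$ lets one get non-negativity by comparing with the zero sub-solution, is the standard argument in fleshed-out form. I see no genuine gap beyond the routine barrier details you already flag as the delicate point.
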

The following auxiliary lemma is a variant of the flatness improvement technique introduced in \cite{ALT16,T13,T16} to study the regularity properties of solutions of dead-core problems.
\begin{lemma}\label{l2.2} Let $g\in L^\infty(B_1)\cap C(B_1)$ be a non-negative function such that 
	$$
	\|g\|_\infty\le\max\{1,M\}\sup_{[0,\|u\|_\infty]}f,
	$$
	where $f$ and $M$ are as in \eqref{1.2}. For any given $\mu >0 $ there exists a constant $\kappa _{\mu}=\kappa(\mu,n)>0$ such that if in $B_1$ a continuous functions $v$, which vanishes at the origin and $v\in[0,1]$, satisfies, in viscosity sense,
	$$\Delta_\infty v  - \kappa_\mu^4g(v)   = 0$$\\
	for $0<\kappa\leq \kappa_{\mu}$, then \\
	$$\sup_{B_{1/2}} v \leq \mu.$$
\end{lemma}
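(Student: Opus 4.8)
The plan is to argue by contradiction and compactness, following the standard flatness-improvement scheme. Suppose the statement fails for some $\mu>0$. Then for every $j\in\N$ one can find $\kappa_j\to 0$, a non-negative $g_j\in L^\infty(B_1)\cap C(B_1)$ with $\|g_j\|_\infty\le\max\{1,M\}\sup_{[0,\|u\|_\infty]}f$, and a continuous $v_j\in[0,1]$ with $v_j(0)=0$ solving $\Delta_\infty v_j-\kappa_j^4 g_j(v_j)=0$ in the viscosity sense, such that $\sup_{B_{1/2}}v_j>\mu$. The idea is that as $\kappa_j\to0$ the right-hand sides $\kappa_j^4 g_j(v_j)$ converge uniformly to zero (their sup-norms are bounded by $\kappa_j^4$ times a fixed constant), so in the limit one obtains an infinity harmonic function, for which a strong maximum/minimum principle forces a contradiction with $v_j(0)=0$ and $\sup_{B_{1/2}}v_j>\mu$.

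The key steps, in order, are: (i) \emph{Uniform regularity and compactness.} Since each $v_j$ is bounded in $[0,1]$ and solves an infinity-Laplace-type equation with right-hand side bounded by $\kappa_j^4\max\{1,M\}\sup_{[0,\|u\|_\infty]}f\le C$, the Lipschitz estimates for the inhomogeneous infinity Laplacian (\cite{L14}) give a uniform $\Lip$ bound for $v_j$ on, say, $B_{3/4}$, depending only on $n$ and that constant. By Arzel\`a--Ascoli, along a subsequence $v_j\to v_\infty$ locally uniformly in $B_{3/4}$, with $v_\infty$ Lipschitz, $v_\infty\in[0,1]$, and $v_\infty(0)=0$. (ii) \emph{Stability of viscosity solutions.} Because $\kappa_j^4 g_j(v_j)\to 0$ uniformly, the standard stability of viscosity solutions under uniform convergence of both the solutions and the equations yields $\Delta_\infty v_\infty=0$ in $B_{3/4}$ in the viscosity sense. (iii) \emph{Conclusion via the minimum principle.} Infinity harmonic functions satisfy the strong maximum principle; since $v_\infty\ge 0$ attains its minimum value $0$ at the interior point $0$, it must be identically $0$ in $B_{3/4}$. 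But passing $\sup_{B_{1/2}}v_j>\mu$ to the limit gives $\sup_{\overline{B_{1/2}}}v_\infty\ge\mu>0$, a contradiction. Hence a suitable $\kappa_\mu=\kappa(\mu,n)>0$ exists.

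The main obstacle is step (ii): making the passage to the limit rigorous at the level of viscosity solutions. One has to check that if $\phi\in C^2$ touches $v_\infty$ from below (say) at $x_0$, then by a standard argument there are points $x_j\to x_0$ at which $v_j-\phi$ (after a small quadratic perturbation to make the touching strict) has a local minimum, so that $\Delta_\infty\phi(x_j)\le\kappa_j^4 g_j(v_j(x_j))\le C\kappa_j^4\to 0$, and then continuity of $\Delta_\infty\phi$ gives $\Delta_\infty\phi(x_0)\le 0$; the sub-solution side is symmetric. This is routine but must be done carefully because the coefficients $g_j$ are merely bounded and need not converge, which is precisely why it is convenient that they enter only through the vanishing factor $\kappa_j^4$. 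A minor point to record is that the hypothesis on $\|g\|_\infty$ is exactly what is used in step (i) to get a uniform constant; note $\kappa\le\kappa_\mu$ may be taken $\le 1$, so $\kappa_\mu^4\|g(v)\|_\infty$ is controlled. Finally, one remarks that the argument only produces existence of $\kappa_\mu$, with no explicit value, which is all that is claimed.
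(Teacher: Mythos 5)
Your proposal is correct and follows essentially the same route as the paper's proof: argue by contradiction, use the uniform Lipschitz estimates of \cite{L14} together with Arzel\`a--Ascoli to extract a locally uniform limit $v_\infty$ with $v_\infty(0)=0$, pass to the limit (the right-hand sides $\kappa_j^4 g_j(v_j)$ vanish uniformly) to get $\Delta_\infty v_\infty=0$, and invoke the maximum principle for infinity harmonic functions to force $v_\infty\equiv0$, contradicting $\sup_{B_{1/2}}v_j>\mu$. The only difference is that you spell out the viscosity-stability step, which the paper leaves implicit.
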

\begin{proof}
We argue by contradiction assuming that there exist $\mu^* >0$, $\{v_i\}_{i\in \N}$ and $\{\kappa_i\}_{i\in \N}$ with $v_i(0)=0$, $0\leq v_i\leq1$, in $B_1$ satisfying in viscosity sense to
	$$\Delta_\infty v_i-\kappa_i^4 g(v_i)= 0$$\\
	where $\kappa_i = \text{o}(1)$, while
\begin{equation}\label{2.2}					
\sup_{B_{1/2}} v_i > \mu^*.
\end{equation}\\
By local Lipschitz regularity (see \cite[Corollary 2]{L14}, for example), the sequence $\{v_i\}_{i\in \N}$ is pre-compact in the $C^{0,1}(B_{3/4})$. Hence, by Arzel\`{a}-Ascoli theorem, $v_i$ converges (up to a subsequence) to a function $v_\infty$ locally uniformly in $B_{2/3}$. Moreover, $v_\infty(0)=0, \; 0\leq v_\infty \leq 1  $ and $\Delta_\infty v_\infty = 0.$ The maximum principle for the infinity harmonic functions then yields $v\equiv  0$, which contradicts to \eqref{2.2} once $i$ is big enough.
\end{proof}
The following definition is for future reference.
\begin{definition}\label{d1.2}
	A function $u$ is called an entire solution, if it is a viscosity solution of \eqref{1.1} in $\R^n$.
\end{definition}
We close this section by reminding the notion of porosity.
\begin{definition}\label{porosity}
	The set $E\subset\R^n$ is called porous with porosity $\sigma$, if there is $R>0$ such that $\forall x\in E$ and $\forall r\in (0,R)$ there exists $y\in\R^n$ such that
	$$
	B_{\sigma r}(y)\subset	B_{r}(x)\setminus E.
	$$
\end{definition}
	A porous set of porosity $\sigma$ has Hausdorff dimension not exceeding
$n-c\sigma^n$, where $c>0$ is a constant depending only on dimension. In particular, a porous set has Lebesgue measure zero (see \cite{Z88}, for instance).

\section{Regularity across the free boundary}\label{s3}
In this section we make use of Lemma \ref{l2.2} and derive regularity result for viscosity solutions of \eqref{1.1} across the free boundary $\partial\{u>0\}$.
\begin{theorem}\label{t3.1}
	If $u$ is a non-negative viscosity solution of \eqref{1.1}, where $f$ satisfies \eqref{1.2}, and $x_0\in\partial \{u>0\}\cap\Omega$, then there exists a constant $C>0$, depending only on $\gamma$, $\|u\|_\infty$ and $\dist (x_0 , \partial \Omega)$, such that
	$$u(x) \leq C |x - x_0|^{\frac{4}{3-\gamma}}$$
	for $x \in \{u>0\} $ near $x_0$.
\end{theorem}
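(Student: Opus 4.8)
The plan is to prove the growth estimate by a standard iteration argument based on the flattening Lemma \ref{l2.2}. Fix $x_0\in\partial\{u>0\}\cap\Omega$ and, after a translation, assume $x_0=0$. Choose $\rho>0$ small so that $B_\rho\subset\Omega$ depends only on $\dist(x_0,\partial\Omega)$. I would first normalize: for a parameter $r\in(0,1]$ to be chosen, set
$$
v_r(y):=\frac{u(ry)}{A}, \qquad y\in B_1,
$$
where $A>0$ is a normalizing constant (depending on $\|u\|_\infty$) chosen so that $0\le v_r\le 1$ in $B_1$; note $v_r(0)=0$ since $0\in\partial\{u>0\}$ and $u\ge0$. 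A direct scaling computation, using the $3$-homogeneity $\Delta_\infty(cw)=c^3\Delta_\infty w$ and the chain rule $\Delta_\infty(w(r\cdot))=r^4(\Delta_\infty w)(r\cdot)$, shows that $v_r$ solves
$$
\Delta_\infty v_r = \frac{r^4}{A^3}\,f(u(ry)) = \frac{r^4}{A^3}\,f(A v_r(y)) \quad\text{in } B_1
$$
in the viscosity sense. By the growth condition \eqref{1.2} (applied with $\delta\sim A$, legitimate once $A$ is in the admissible range, which we arrange by first fixing a small base value and iterating), the right-hand side is of the form $\kappa_r^4\, g(v_r)$ with $g$ a non-negative continuous function satisfying the bound $\|g\|_\infty\le\max\{1,M\}\sup_{[0,\|u\|_\infty]}f$ required in Lemma \ref{l2.2}, and with $\kappa_r$ proportional to $r^{1-\gamma/4}\,(\text{up to constants})$.

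Next I would set up the discrete iteration. Choose $\mu:=2^{-\frac{4}{3-\gamma}}\in(0,1)$ and let $\kappa_\mu=\kappa(\mu,n)$ be the constant from Lemma \ref{l2.2}. Pick the base radius $r_0$ small enough (depending on $\gamma$, $\|u\|_\infty$, $\dist(x_0,\partial\Omega)$) so that the corresponding $\kappa\le\kappa_\mu$; then Lemma \ref{l2.2} gives $\sup_{B_{1/2}}v_{r_0}\le\mu$, i.e. $\sup_{B_{r_0/2}}u\le A\mu$. The claim is then that, iterating,
$$
\sup_{B_{2^{-k}r_0}} u \le A\,\mu^{k}=A\,2^{-\frac{4k}{3-\gamma}}
\qquad\text{for all } k\in\N.
$$
This follows by induction: assuming the bound at step $k$, rescale $u$ on $B_{2^{-k}r_0}$ by the factor $A\mu^k$ to get a normalized competitor $w$ on $B_1$ with $0\le w\le1$, $w(0)=0$; the equation for $w$ now carries a coefficient $\kappa$ that has picked up an extra factor $(2^{-k})^{4}(\mu^k)^{-3}=\mu^{k}\le1$ — this is precisely where $\gamma<3$ is used, ensuring the coefficient does not grow under the iteration and still satisfies $\kappa\le\kappa_\mu$ — so Lemma \ref{l2.2} applies again and yields the bound at step $k+1$.

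Finally I would pass from the dyadic estimate to the continuous one. Given $x\in\{u>0\}$ near $x_0$ with $|x-x_0|=:d<r_0$, pick $k$ with $2^{-(k+1)}r_0\le d<2^{-k}r_0$; then
$$
u(x)\le\sup_{B_{2^{-k}r_0}}u\le A\,2^{-\frac{4k}{3-\gamma}}
= A\,\bigl(2^{-(k+1)}r_0\bigr)^{\frac{4}{3-\gamma}}\cdot\frac{2^{\frac{4}{3-\gamma}}}{r_0^{\frac{4}{3-\gamma}}}
\le C\,|x-x_0|^{\frac{4}{3-\gamma}},
$$
with $C$ depending only on $\gamma$, $\|u\|_\infty$ and $\dist(x_0,\partial\Omega)$ through $A$ and $r_0$. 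The main obstacle, and the point deserving the most care, is the bookkeeping of the coefficient $\kappa$ through the rescalings: one must verify that the growth hypothesis \eqref{1.2} can be invoked at each step with a $\delta$ in its (small) range of validity, and that the compounded factor $(2^{-k})^4(\mu^k)^{-3}$ stays bounded by $1$ — both hinge on the choice $\mu=2^{-4/(3-\gamma)}$ and on $\gamma<3$. Everything else is routine scaling and an Arzelà–Ascoli-free induction.
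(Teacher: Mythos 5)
Your overall architecture coincides with the paper's: normalize $u$ so that the flattening Lemma \ref{l2.2} applies, iterate dyadically with $\mu=2^{-\frac{4}{3-\gamma}}$, then interpolate to arbitrary radii. However, the justification of the key induction step contains a genuine error. The compounded scaling factor is $(2^{-k})^4(\mu^k)^{-3}=2^{-4k}\cdot 2^{\frac{12k}{3-\gamma}}=2^{\frac{4\gamma k}{3-\gamma}}$, not $\mu^k$; for $\gamma\in(0,3)$ this factor grows without bound, so the claim that ``the coefficient does not grow under the iteration'' by scaling alone is false, and the induction as you justify it does not close. What saves the argument --- and is exactly how the paper closes it --- is that in the rescaled equation the argument of $f$ has shrunk by the factor $2^{-\frac{4k}{3-\gamma}}$: applying \eqref{1.2} with $\delta=2^{-\frac{4k}{3-\gamma}}$ (small for $k$ large) produces the factor $M\delta^\gamma=M2^{-\frac{4\gamma k}{3-\gamma}}$, which cancels the growing prefactor exactly, leaving a right-hand side bounded by $\kappa_\mu^4\max\{1,M\}\sup_{[0,\|u\|_\infty]}f$ --- precisely the allowance built into Lemma \ref{l2.2}. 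The exponent $\mu=2^{-\frac{4}{3-\gamma}}$ is chosen to produce this cancellation, not to make the scaling factor itself small; so the mechanism you attribute to $\gamma<3$ is misplaced, even though the needed ingredient (invoking \eqref{1.2} at each step with a small $\delta$) is one you mention only in passing and never actually use in the bookkeeping.

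Two smaller points. At the initial normalization you invoke \eqref{1.2} with $\delta\sim A$; this is both unnecessary and not legitimate in general, since \eqref{1.2} is assumed only for small $\delta$ while $A$ is essentially $\max\{1,\|u\|_\infty\}$. The paper instead writes the right-hand side as $\kappa_\mu^4\tau^\gamma f(\tau^{-1}w_0)$ with $\tau=\min\{1,\|u\|_\infty^{-1}\}\le 1$ and bounds it by $\sup_{[0,\|u\|_\infty]}f$ directly, using no growth condition at this stage; the factor $M$ is only paid during the iteration. Finally, the assertion that $\kappa_r$ is proportional to $r^{1-\gamma/4}$ is off (with $A$ fixed it is proportional to $r$), though this is harmless since all you need is $\kappa_r\le\kappa_\mu$ for $r$ below a base radius depending only on $\gamma$, $\|u\|_\infty$ and $\dist(x_0,\partial\Omega)$.
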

\begin{proof}
	The idea is to use an iteration argument and carefully choose sequence of functions that allows to make use of the Lemma \ref{l2.2}.  Observe that without loss of generality, we may assume that $x_0=0$ and $B_1\subset\Omega$.
	
	For $\mu =2^{-\frac{4}{3-\gamma}}$, let now $\kappa_{\mu} >0$ be as in Lemma \ref{l2.2}. We then construct the first member of the sequence by setting	
	$$
	w_0(x):= \tau u(\rho x) \quad \text{in} \quad B_1,
	$$	
	where
	$$
	\tau:= \min \left\{1, \|u\|_\infty^{-1} \right\}\,\,\,\textrm{ and }\,\,\,\rho :=\kappa_\mu   \tau^{-\frac{3-\gamma}{4}}.
	$$
	Note that $\tau^3\rho^4=\kappa_\mu^4\tau^\gamma$, $w_0(0)=0$ and in $w_0\in[0,1]$. Since $u$ is a viscosity solution of \eqref{1.1}, then
	$$
	\Delta_\infty w_0(x) - \tau^3\rho^4 f(\tau^{-1}w_0(x))=0
	$$
	or, equivalently,
	\begin{equation}\label{3.1}
	\Delta_\infty w_0(x)-\kappa_\mu^4\tau^{ \gamma} f(\tau^{-1}w_0(x)) = 0.
	\end{equation}
	Since $\tau\le1$, then 	
	$g(w_0):=\tau^\gamma f(\tau^{-1}w_0)\le f(u(\rho x))\le\displaystyle\sup_{[0,\|u\|_\infty]}f$. From Lemma \ref{l2.2}, we obtain	
	$$
	\sup_{B_{1/2}} w_0 \leq 2^{-\frac{4}{3-\gamma}}.
	$$
	For $i\in \N $, we then define
	\[ w_i(x) := 2^{ -\frac{4}{3-\gamma}}w_{i-1}(2^{-1}x). \]
	and observe that $w_i(0)=0$, $w_i\in[0,1]$ and $w_i$ satisfies 
	$$
	\Delta_\infty w_i(x)=\kappa_\mu^4 2^{\frac{4\gamma}{3-\gamma}i}\tau^\gamma f\left(\tau^{-1}2^{-\frac{4}{3-\gamma}i}w_i(x)\right).
	$$
	Using \eqref{1.2}, for $i$ big we estimate
	$$
	2^{\frac{4\gamma}{3-\gamma}i}\tau^\gamma f\left(\tau^{-1}2^{-\frac{4}{3-\gamma}i}w_i(x)\right)\le M\tau^\gamma f\left(\tau^{-1}w_i(x)\right)\le M\displaystyle\sup_{[0,\|u\|_\infty]}f.
	$$
	Once again applying Lemma \ref{l2.2}, one gets
	\[  \sup_{B_{1/2}} w_i \leq 2^{-\frac{4}{3-\gamma}},  \]
	or in other terms,
	\[  \sup_{B_{1/4}} w_{i-1} \leq 2^{-2 \frac{4}{3-\gamma}}. \]
	Continuing this way, for $w_0$ we obtain
	\begin{equation}\label{3.2}
	\sup_{B_{2^{-i}}} w_0 \leq 2^{-i\frac{4}{3-\gamma}}.
	\end{equation}
	Next, for a fixed $0<r\leq \frac{\rho}{2}$, by choosing $i\in \N$ such that
	$$
	2^{-(i+1)} < \frac{r}{\rho}  \leq 2^{-i},
	$$
	and using \eqref{3.2}, we estimate
	\begin{equation*}
	\begin{split}
    \sup_{B_{r}} u &\leq \sup_{B_{\rho2^{-i}}} u = \tau^{-1}\sup_{B_{\rho2^{-i}}} w_0\\
    &\leq\tau^{-1} 2^{-i\frac{4}{3-\gamma}} = 2^{\frac{4}{3-\gamma}}\tau^{-1}2^{-(i+1)\frac{4}{3-\gamma}}\\
    & \leq \left(\tau^{-1}2\rho^{-1}\right)^{\frac{4}{3-\gamma}}r^{\frac{4}{3-\gamma}}\\
    &=Cr^{\frac{4}{3-\gamma}}.
	\end{split}
	\end{equation*}
\end{proof}
Geometrically Theorem \ref{t3.1} means that no matter how ``bad'' the function $u$ is in $\{u>0\}$, it touches the free boundary $\partial\{u>0\}$ smoothly. In other words, a non-negative viscosity solution of \eqref{1.1} may have cusp singularities in its positivity set, and yet it is smooth near its free boundary. 

\section{Liouville type results}\label{Liouville}

Despite the regularity information being available only across the free boundary, it is enough to derive the following Liouville type theorem.
\begin{theorem}\label{t3.2}
	If $u$ is an entire solution, \eqref{1.2} holds and $u(x_0)=0$ for a $x_0\in\R^n$ with
    \begin{equation}\label{3.3}
    	u(x) = o\left(|x|^{\frac{4}{3-\gamma}}\right),\,\,\,\textrm{ as }\,\,\,|x| \rightarrow \infty,
    \end{equation}
    then $u \equiv0$.
\end{theorem}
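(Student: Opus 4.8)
The plan is to combine the free-boundary regularity estimate of Theorem~\ref{t3.1} with the growth hypothesis \eqref{3.3} via a scaling (blow-down) argument. First I would fix the vanishing point $x_0$ and, for $R>0$ large, rescale $u$ by setting $u_R(x):=R^{-\frac{4}{3-\gamma}}u(x_0+Rx)$. A direct computation using the $3$-homogeneity of $\Delta_\infty$ shows that $u_R$ solves $\Delta_\infty u_R=R^{\frac{4\gamma}{3-\gamma}}f\!\left(R^{\frac{4}{3-\gamma}}u_R\right)$ in $\R^n$; one checks using \eqref{1.2} (applied with $\delta=R^{-\frac{4}{3-\gamma}}$, so that the small-$\delta$ regime is exactly the large-$R$ regime) that the right-hand side of this equation is controlled, after rescaling, in a way that does not blow up. So $u_R$ is still a non-negative viscosity solution of a problem of the same type with a source satisfying the same structural bound, and $x_0$ (i.e.\ $x=0$ for $u_R$) still lies on its free boundary.

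Next I would apply Theorem~\ref{t3.1} to $u_R$ at the free-boundary point $0$, which gives $u_R(x)\le C|x|^{\frac{4}{3-\gamma}}$ for $x$ near $0$ with a constant $C$ independent of $R$ (it depends only on $\gamma$, on $\|u_R\|_\infty$ on the relevant ball, and on the distance to the boundary, which is infinite here). Unwinding the scaling, this says $u(y)\le C|y-x_0|^{\frac{4}{3-\gamma}}$ for $|y-x_0|\le cR$; letting $R\to\infty$ yields the bound $u(y)\le C|y-x_0|^{\frac{4}{3-\gamma}}$ for \emph{all} $y\in\R^n$, with $C$ uniform. In other words, the vanishing of $u$ at one point, together with the entire equation, already forces global growth at most of order $|y|^{\frac{4}{3-\gamma}}$.

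Finally I would upgrade "at most order $r^{\frac{4}{3-\gamma}}$ everywhere" to "identically zero" using the extra hypothesis \eqref{3.3}. The idea is to run the scaling argument again but now extract the optimal constant: define $S(R):=R^{-\frac{4}{3-\gamma}}\sup_{B_R(x_0)}u$, which is bounded by the previous step; hypothesis \eqref{3.3} says $S(R)\to 0$ as $R\to\infty$. On the other hand, the non-degeneracy/scaling structure (the same rescaled functions $u_R$, now normalized by their own sup so as to have sup equal to $1$ on $B_1$) converges along a subsequence, by the local Lipschitz estimate of \cite{L14} and Arzel\`a--Ascoli, to a non-negative infinity-harmonic function $u_\infty$ on $\R^n$ with $u_\infty(0)=0$ and $\sup_{B_1}u_\infty=1$ (or a comparable positive value, obtained by working along radii realizing the sup), contradicting $S(R)\to 0$ unless $u\equiv 0$. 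Alternatively, and more cleanly, one observes that $v:=R^{-\frac{4}{3-\gamma}}u(x_0+R\,\cdot\,)$ with $R\to\infty$ converges locally uniformly to a global solution with zero right-hand side (the source term vanishes in the limit because of the scaling and \eqref{1.2}), i.e.\ to an infinity-harmonic function which is non-negative, vanishes at the origin, and by \eqref{3.3} is sublinear at infinity and in fact $o(|x|^{\frac{4}{3-\gamma}})$; the strong minimum principle for infinity-harmonic functions (an infinity-harmonic function attaining an interior minimum is constant) then forces this limit to be identically zero, and tracing back, together with the uniform bound, gives $u\equiv 0$.

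I expect the main obstacle to be the careful bookkeeping in the rescaling: one must verify that after the blow-down the rescaled source term stays in the admissible class (this is exactly where the restriction $\gamma<3$ and the form of \eqref{1.2} are used — the exponent $\frac{4\gamma}{3-\gamma}$ on $R$ must be absorbed), and that the constant in Theorem~\ref{t3.1} can be taken uniform as $R\to\infty$, which requires tracking that $\|u_R\|_{L^\infty(B_2)}$ stays bounded — itself a consequence of the growth we are trying to prove, so the argument must be organized to avoid circularity (e.g.\ first prove the \emph{a priori} global bound $u(y)\le C|y-x_0|^{\frac{4}{3-\gamma}}$ using Theorem~\ref{t3.1} on balls of fixed radius around $x_0$ and only then pass to the limit).
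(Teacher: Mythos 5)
Your first two steps track the paper: the blow-down $u_R(x)=R^{-\frac{4}{3-\gamma}}u(x_0+Rx)$ is exactly the paper's sequence $u_k$, and one does check that the rescaled source stays in the class \eqref{1.2} (the correct prefactor is $R^{-\frac{4\gamma}{3-\gamma}}$, not $R^{+\frac{4\gamma}{3-\gamma}}$ --- a sign slip) and that $\|u_R\|_{L^\infty(B_1)}$ stays bounded, which in fact follows directly from \eqref{3.3} (so there is no circularity to organize around). The genuine gap is in your final step, which is where the theorem actually lives. Your ``cleaner'' alternative is vacuous: by \eqref{3.3} the un-normalized blow-down converges locally uniformly to the zero function, so ``the limit is a non-negative infinity-harmonic function vanishing at the origin, hence identically zero'' is merely a restatement of the hypothesis, and there is no mechanism to ``trace back'' to $u\equiv0$; likewise the uniform global bound $u\le C|x-x_0|^{\frac{4}{3-\gamma}}$ cannot by itself force $u\equiv0$ (cf.\ Remark~\ref{r3.2}: the profile $\Upsilon|x-x_0|^{\frac{4}{3-\gamma}}$ has exactly this growth and is nontrivial). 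Your normalized alternative fails for a different reason: dividing by $S(R)=\sup_{B_1}u_R\to0$ multiplies the source by $S(R)^{-3}$, and already in the model case $f(t)=t^\gamma$ the right-hand side of the equation for $v_R=u_R/S(R)$ becomes $S(R)^{\gamma-3}v_R^\gamma\to\infty$; so there is no uniform Lipschitz estimate from \cite{L14}, no Arzel\`a--Ascoli compactness, and no reason the limit is infinity harmonic. Moreover, ``contradicting $S(R)\to0$'' is not coherent, since the normalization has removed the scale; a contradiction of that type would require a non-degeneracy statement, which needs the lower bound \eqref{5.1} that Theorem~\ref{t3.2} does not assume.

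What the paper uses instead, and what is missing from your plan, is a \emph{quantitative} gain along the blow-down: since $\sup_{B_r}u_k\to0$ (by \eqref{3.3} when $|kx_k|\to\infty$, by continuity of $u$ when $|kx_k|$ stays bounded), the estimate of Theorem~\ref{t3.1} applied to $u_k$ holds with constants $C_k\to0$, i.e.\ $u_k(x)\le C_k|x|^{\frac{4}{3-\gamma}}$, and inserting this into the scale-invariant quotient $\sup_{B_{kr}}u(x)/|x|^{\frac{4}{3-\gamma}}=\sup_{B_r}u_k(x)/|x|^{\frac{4}{3-\gamma}}$ contradicts $u(y)>0$ for any fixed $y$. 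In other words, you need the constant in the free-boundary estimate to \emph{improve} as the rescaled solution flattens, not merely to remain uniform in $R$; with only a uniform $C$, as in your step two, the argument cannot conclude.
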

\begin{proof}
	Without loss of generality we may assume that $x_0=0$. For $k\in\mathbb{N}$, set
	\[ u_k(x):= k^{\frac{-4}{3-\gamma}}u(kx),\quad x\in B_1,\]
	where $B_1$ is the ball of radius one centered at the origin. Note that $ u_k(0)=0 $. Since $u$ is an entire solution, for $x\in B_1$ one has
	\begin{equation*}
	\Delta_\infty u_k(x)=k^{\frac{-4 \gamma}{3-\gamma}} f\left(k^{\frac{4}{3-\gamma}} u_{k}(x)\right).
	\end{equation*}	
	Note that the right hand side of the last equation satisfies \eqref{1.2}. From Theorem \ref{t3.1}, we then deduce that if $x_k\in\overline{B}_r$ is such that
    $$
    u_k(x_k)=\sup_{\overline{B}_r} u_k,
    $$
    where $r>0$ is small, then in $B_r$ one has
    \begin{equation}\label{3.4}
    \|u_k \|_\infty\to0,\,\,\,\textrm{ as }\,\,\,k\to\infty.
    \end{equation}
    In fact, if $|kx_k|$ remains bounded as $k\to\infty$, then applying Theorem \ref{t3.1} to $u_k$ we obtain
	\begin{equation}\label{3.5}
	u_k (x_k) \le C_k|x_k|^{\frac{4}{3-\gamma}},
	\end{equation}
	where $C_k>0$ and $C_k\to0$. This implies that $u(kx_k)$ remains bounded as $k\to\infty$, and therefore $u_k(x_k)\to0$, as $k\to\infty$, and \eqref{3.4} is true. It remains true also in the case when $|kx_k|\to\infty$, as $k\to\infty$, since then from \eqref{3.3} we get
	\begin{equation*}
	u_k(x_k) \leq |kx_k|^{-\frac{4}{3-\gamma}}k^{-\frac{4}{3-\gamma}}\to0,\,\,\,\textrm{ as }\,\,\,k\to\infty.
	\end{equation*}
	Now, if there exists $y\in\R^n$ such that $u(y)>0$, by choosing $k\in\mathbb{N}$ large enough so $y\in B_{kr}$ and using \eqref{3.4} and \eqref{3.5}, we estimate
	\[\dfrac{u(y)}{|y|^{\frac{4}{3-\gamma}}} \leq  \sup_{B_{kr}} \dfrac{u(x)}{|x|^{\frac{4}{3-\gamma}}}=\sup_{B_r}  \dfrac{u_k(x)}{|x|^{\frac{4}{3-\gamma}}}\leq \dfrac{u(y)}{2|y|^{\frac{4}{3-\gamma}}},\]
	which is a contradiction.
\end{proof}
In fact, once the comparison principle holds, the condition \eqref{3.3} can be weakened in the following sense (Theorem \ref{t3.3} below). Let $x_0\in\R^n$ and $r>0$ be fixed, and let $u\ge0$ be the unique solution of \eqref{2.1} in $B_r(x_0)$ with $\varphi\equiv\alpha_r>0$ constant, guaranteed by Theorem \ref{t2.1}. Note that $u$ is a viscosity sub-solution of
\begin{equation}\label{3.7}
\left\{
\begin{aligned}
\Delta_\infty v &= \lambda v_+^{\gamma} &&\textrm{ in } B_r(x_0),\\
v&=\alpha_r &&\textrm{ on } \partial B_r(x_0),
\end{aligned}
\right.
\end{equation}
where
\begin{equation}\label{3.6}
\lambda:=M^{-1}\beta^{-\gamma}f(\beta),
\end{equation}
and $\beta>\|u\|_\infty$ is a constant big enough so \eqref{1.2} holds. Then the condition \eqref{3.3} can be weakened and substituted by
\begin{equation}\label{3.8}
	\limsup_{|x| \rightarrow \infty} \dfrac{u(x)}{|x-x_0|^{\frac{4}{3-\gamma}}} < \left( \lambda \frac{(3-\gamma)^4}{64(1+\gamma)} \right)^{\frac{1}{3-\gamma}},
\end{equation}
where $\lambda$ is defined by \eqref{3.6}, and Theorem \ref{t3.2} can be improved to the following variant (see Theorem \ref{t3.3} below). The choice of the right hand side of \eqref{3.8} comes from the explicit structure of the unique solution of \eqref{3.7}, which, as observed in \cite{ALT16}, is given by
\begin{equation}\label{3.9}
v(x):=\Upsilon\left(|x-x_0|-r + \left(\frac{\alpha_r}{\Upsilon}\right)^{\frac{3-\gamma}{4}} \right)^{\frac{4}{3-\gamma}}_+,
\end{equation}
where
\begin{equation}\label{3.10}
	\Upsilon:= \left(\lambda \frac{(3-\gamma)^4}{64(1+\gamma)}\right)^{\frac{1}{3-\gamma}}.
\end{equation}
\begin{theorem}\label{t3.3}
	Let \eqref{1.2}, \eqref{comparison} hold. If $u$ is an entire solution and satisfies \eqref{3.8}, then $u\equiv0$.
\end{theorem}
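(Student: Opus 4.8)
The strategy is to trap $u$ from above, on each large ball $B_r(x_0)$, underneath the explicit radial profile \eqref{3.9}, and then to let $r\to\infty$ so that the dead core of that profile exhausts $\R^n$. First I would fix $r>0$, set $\alpha_r:=\sup_{\partial B_r(x_0)}u$, and let $w_r\ge0$ be the solution of \eqref{2.1} in $B_r(x_0)$ with constant datum $\varphi\equiv\alpha_r$ given by Theorem \ref{t2.1}; then $0\le w_r\le\alpha_r$ in $B_r(x_0)$ (Theorem \ref{t2.1} together with comparison with the constant super-solution $\alpha_r$). Since $u\le\alpha_r=w_r$ on $\partial B_r(x_0)$ and both $u$ and $w_r$ solve \eqref{1.1}, Lemma \ref{l2.1} (applicable by \eqref{comparison}) yields $u\le w_r$ in $B_r(x_0)$.

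Next I would compare $w_r$ with the barrier. As recorded just before the statement of the theorem, $w_r$ is a viscosity sub-solution of \eqref{3.7} with $\lambda$ as in \eqref{3.6}, while the function $v$ of \eqref{3.9} is the solution of \eqref{3.7} in $B_r(x_0)$ with the same datum $\varphi\equiv\alpha_r$; since the reaction term $\lambda t_+^\gamma$ is non-decreasing, Lemma \ref{l2.1} applies once more and gives $w_r\le v$, hence $u\le v$, in $B_r(x_0)$. Reading off \eqref{3.9}--\eqref{3.10}, the profile $v$ vanishes identically on $B_{\varrho_r}(x_0)$ with $\varrho_r:=r-(\alpha_r/\Upsilon)^{(3-\gamma)/4}$, and therefore $u\equiv0$ on $B_{\varrho_r}(x_0)$.

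Finally I would invoke \eqref{3.8} to make $\varrho_r$ grow linearly in $r$: the strict inequality there furnishes $\theta\in(0,1)$ and $R_0>0$ with $u(x)\le\theta\,\Upsilon\,|x-x_0|^{4/(3-\gamma)}$ whenever $|x-x_0|\ge R_0$, whence $\alpha_r\le\theta\,\Upsilon\,r^{4/(3-\gamma)}$ and so $\varrho_r\ge\bigl(1-\theta^{(3-\gamma)/4}\bigr)r$ for all $r\ge R_0$; letting $r\to\infty$ (the radius tends to infinity because $\theta<1$ and $\gamma<3$) shows $u\equiv0$ on $\R^n$. The point I expect to need the most care is this last one: it is essential that \eqref{3.8} be a \emph{strict} $\limsup$ bound by the threshold $\Upsilon$ of \eqref{3.10}, since only then does the dead core of the barrier expand at a definite rate --- a bound $\limsup\le\Upsilon$ would not suffice. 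The comparison steps are otherwise routine given the setup preceding the theorem, with \eqref{comparison} securing the two applications of Lemma \ref{l2.1} and \eqref{1.2} (through the choice of $\beta$ and $\lambda$ in \eqref{3.6}) securing the sub-solution property of $w_r$ relative to \eqref{3.7}.
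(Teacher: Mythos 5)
Your overall route is the same as the paper's: trap $u$ under the explicit dead-core profile \eqref{3.9} on $B_r(x_0)$ and use the strictness of \eqref{3.8} to make the plateau radius grow like $\bigl(1-\theta^{(3-\gamma)/4}\bigr)r$. Your final limiting step is in fact spelled out more carefully than in the paper, and the pairing in your second application of Lemma \ref{l2.1} ($v$ a solution, hence super-solution, of \eqref{3.7} versus $w_r$ with equal boundary values) is the correct one \emph{provided} $w_r$ really is a sub-solution of \eqref{3.7}.

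That proviso is exactly the gap. The claim that $w_r$ is a viscosity sub-solution of \eqref{3.7} amounts to the pointwise lower bound $f(s)\ge\lambda s^{\gamma}$ for $0\le s\le\alpha_r$, and this is not ``secured by \eqref{1.2}'': inserting $t=\beta$, $\delta=s/\beta$ into \eqref{1.2} gives $f(s)\le M\beta^{-\gamma}f(\beta)\,s^{\gamma}$, an \emph{upper} bound of homogeneous type, which makes $w_r$ a super-solution of a power-type equation --- the opposite of what your comparison needs. The missing ingredient is a lower bound of the type \eqref{5.1}, and it is genuinely needed, not just a technicality: $f(t)=(t-1)_+^{\gamma}$ satisfies \eqref{1.2} and \eqref{comparison}, yet $f\equiv0$ on $[0,1]$, so no $\lambda>0$ can verify $f(s)\ge\lambda s^{\gamma}$ there; for this $f$ the constant $u\equiv\tfrac12$ is an entire solution satisfying \eqref{3.8} without vanishing. (The paper's own proof is equally soft at this point: it asserts, ``using \eqref{1.2}'', that $v$ is a sub-solution of \eqref{1.1} and then invokes Lemma \ref{l2.1}, but with $v$ as the sub-solution and $u\le v$ on $\partial B_r(x_0)$ the lemma yields nothing in the direction $u\le v$; what the argument actually requires is that $v$ be a \emph{super}-solution of \eqref{1.1}, i.e. once more $f(s)\ge\lambda s^{\gamma}$.) So to complete your plan you must either add a hypothesis such as \eqref{5.1} (which gives $f(s)\ge N\beta^{-\gamma}f(\beta)s^{\gamma}$, with $\lambda$ then chosen accordingly and uniformly in $r$) or otherwise establish the lower bound; as written, the decisive inequality is assumed rather than proved.
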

\begin{proof}
	Once $r>0$ is large enough, then \eqref{3.8} guarantees, with $\Upsilon>0$ defined by \eqref{3.10},
	\[ \sup_{\partial B_r} \dfrac{u(x)}{r^{\frac{4}{3-\gamma}}} \leq \theta\Upsilon,\]
	for some $\theta < 1$. On the other hand, using \eqref{1.2}, one has that the unique solution of \eqref{3.7}, with $\alpha_r=\displaystyle\sup_{\partial B_r(x_0)}u$, given by \eqref{3.9}, is a viscosity sub-solution of \eqref{1.1}. The comparison principle, Lemma \ref{l2.1}, then implies that $u\le v$ in $B_r(x_0)$. Letting $r\to\infty$, we conclude that $u\equiv0$.	
\end{proof}
\begin{remark}\label{r3.1}
	As can be seen from \eqref{3.9}, the plateau of $v$, i.e., the set $\{v=0\}$, is the ball $\overline{B}_{R}(x_0) $, where
	$$
	0<R:=r - \left(\frac{\alpha_r}{\Upsilon}\right)^{\frac{3-\gamma}{4}}.
	$$
    Since $0\le u\le v$, the plateau of $u$ contains the $\overline{B}_R(x_0)$.
\end{remark}

\begin{remark}\label{r3.2}
	Note that the inequality \eqref{3.8} has to be strict. For example, if
	$$
	w(x):=\Upsilon|x-x_0|^{\frac{4}{3-\gamma}}
	$$
	then
	$$
	\limsup_{|x| \rightarrow \infty} \dfrac{w(x)}{|x-x_0|^{\frac{4}{3-\gamma}}}=\Upsilon,
	$$
	but $w$ is not identically zero.
\end{remark}

\section{Non-degeneracy and porosity}\label{s4}

In this section we show that once
\begin{equation}\label{5.1}
	f(\delta t)\ge N\delta^\gamma f(t)\ge0,
\end{equation}
with $N>0$, $\gamma\in[0,3)$, $t>0$ bounded, and $\delta>0$ small enough, then across the free boundary non-negative viscosity solutions of \eqref{1.1} grow exactly as $r^\frac{4}{3-\gamma}$ in the ball $B_r$, for $r>0$ small enough. As a consequence, we conclude that the touching ground surface is a porous set, which implies that it has Hausdorff dimension less than $n$, and so its Lebesgue measure is zero (see \cite{Z88}). We start by the following non-degeneracy theorem.
\begin{theorem}\label{t4.1} 
	Let \eqref{5.1} hold. Let also $f$ satisfy \eqref{comparison} or $\inf f>0$. If $u$ is a non-negative viscosity solution of \eqref{1.1}, then there exists a universal constant $c>0$, depending only on dimension and $\gamma$, such that
	\[ \sup_{B_{r}(x_0)} u \ge cr^{\frac{4}{3-\gamma}},  \]
where $x_0\in\overline{\{u>0\}}\cap\Omega$ and $0<r<\operatorname{dist}(x_0, \partial\Omega)$.
\end{theorem}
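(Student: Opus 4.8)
The plan is to prove the estimate by contradiction combined with the comparison principle, using the explicit barrier from \eqref{3.9}. Suppose the conclusion fails: then for every $c>0$ there is a point $x_0\in\overline{\{u>0\}}\cap\Omega$ and a radius $r$ with $0<r<\dist(x_0,\partial\Omega)$ such that $\sup_{B_r(x_0)}u<cr^{4/(3-\gamma)}$. By continuity of $u$ it suffices to treat $x_0\in\{u>0\}$, so $u(x_0)>0$. The idea is to compare $u$ on $B_r(x_0)$ with a radial sub-solution of the form
$$
\Psi(x):=\mathrm{A}\,|x-x_0|^{\frac{4}{3-\gamma}},
$$
which, by the computation behind \eqref{3.9}--\eqref{3.10}, satisfies $\Delta_\infty\Psi=\mathrm{A}^3\frac{64(1+\gamma)}{(3-\gamma)^4}|x-x_0|^{\frac{4\gamma}{3-\gamma}}=\mathrm{A}^{3}\frac{64(1+\gamma)}{(3-\gamma)^{4}}\,\mathrm{A}^{-\gamma}\,\Psi^{\gamma/1}\cdot(\dots)$; more precisely $\Delta_\infty\Psi = \widetilde{\mathrm{A}}\,\Psi^{\gamma}$ for an explicit constant $\widetilde{\mathrm{A}}$ proportional to $\mathrm{A}^{3-\gamma}$. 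Using the lower bound \eqref{5.1}, one shows that for $\mathrm{A}$ chosen small enough (depending only on $n$, $\gamma$, $N$ and $\sup_{[0,\|u\|_\infty]}$-type quantities) this $\Psi$ is a viscosity sub-solution of $\Delta_\infty\Psi\ge f(\Psi)$ on $B_r(x_0)$.

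Next I would arrange the boundary comparison. On $\partial B_r(x_0)$ we have $\Psi=\mathrm{A}r^{4/(3-\gamma)}$, while by the contradiction hypothesis $u<cr^{4/(3-\gamma)}$ there. Choosing the failing constant $c$ to be smaller than the universal $\mathrm{A}$ just fixed, we get $u<\Psi$ on $\partial B_r(x_0)$. Since $\Psi$ is a sub-solution and $u$ is a (super-)solution of \eqref{1.1}, and $f$ satisfies \eqref{comparison} or $\inf f>0$, Lemma \ref{l2.1} applies and yields $u\le\Psi$ throughout $B_r(x_0)$. Evaluating at the center gives $u(x_0)\le\Psi(x_0)=0$, contradicting $u(x_0)>0$. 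This forces the existence of the universal $c>0$ with $\sup_{B_r(x_0)}u\ge cr^{4/(3-\gamma)}$.

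The main obstacle is verifying that $\Psi(x)=\mathrm{A}|x-x_0|^{4/(3-\gamma)}$ is genuinely a viscosity sub-solution of the \emph{full} equation $\Delta_\infty\Psi\ge f(\Psi)$ on all of $B_r(x_0)$, not merely of the model equation $\Delta_\infty v=\lambda v_+^\gamma$. Away from the center this is a pointwise $C^2$ computation: $\Delta_\infty\Psi$ equals an explicit multiple of $\mathrm{A}^{3}|x-x_0|^{\frac{4\gamma}{3-\gamma}}$, and by \eqref{5.1} (applied with $\delta=\mathrm{A}|x-x_0|^{4/(3-\gamma)}/\beta$ and a suitable reference value $\beta$) one bounds $f(\Psi)$ above by a constant times $\mathrm{A}^{\gamma}|x-x_0|^{\frac{4\gamma}{3-\gamma}}$, and then picking $\mathrm{A}$ small makes the left side dominate; one also must check the degenerate behaviour at the center $x_0$, where $\Psi$ has a genuine minimum but $f(\Psi(x_0))=f(0)\ge0$ and no admissible test function can be touched from below with a nonzero gradient, so the sub-solution inequality is vacuous or trivially satisfied there. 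Care is needed to make the constant $\mathrm{A}$, hence $c$, depend only on $n$ and $\gamma$ (the dependence on $N$ and on $f$ near $0$ must be absorbed or shown harmless via rescaling, exactly as in the proof of Theorem \ref{t3.1}); handling the case $\gamma=0$ separately, where $f$ need not vanish at $0$, is a minor additional check. Once this barrier construction is in place, the comparison step is routine.
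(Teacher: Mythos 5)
Your overall strategy --- comparing $u$ on $B_r(x_0)$ with the radial barrier $\Psi(x)=\mathrm{A}|x-x_0|^{\frac{4}{3-\gamma}}$, fixing $\mathrm{A}$ through \eqref{5.1}, and reaching a contradiction at the center via Lemma \ref{l2.1} --- is exactly the paper's argument. However, as written the orientation of every inequality in the key step is reversed, and this is a genuine error rather than a naming slip. Condition \eqref{5.1} is a \emph{lower} bound on $f$: writing $\Psi=\delta\beta$ it gives $f(\Psi)\ge N\beta^{-\gamma}f(\beta)\,\Psi^{\gamma}$, not the upper bound you invoke. Since the pointwise computation gives $\Delta_\infty\Psi=\frac{64(1+\gamma)}{(3-\gamma)^4}\,\mathrm{A}^{3-\gamma}\,\Psi^{\gamma}$, choosing $\mathrm{A}$ small makes the left-hand side \emph{smaller} than $f(\Psi)$ (because $\mathrm{A}^{3-\gamma}\to0$), i.e.\ it produces a viscosity \emph{super}-solution, $\Delta_\infty\Psi\le f(\Psi)$; your claim that smallness of $\mathrm{A}$ makes $\Delta_\infty\Psi$ dominate $f(\Psi)$ is backwards (a sub-solution would require the upper bound \eqref{1.2}, which is how \eqref{3.9} is used in Theorem \ref{t3.3}, a different statement). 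The comparison step inherits the same flaw: Lemma \ref{l2.1} propagates the ordering ``super-solution $\ge$ sub-solution'' from the boundary to the interior, so with your labels (sub-solution $\Psi$ lying \emph{above} the solution $u$ on $\partial B_r(x_0)$) it yields nothing, and the conclusion $u\le\Psi$ in $B_r(x_0)$ does not follow as stated.

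The error is a consistent swap, and fixing it recovers the paper's proof: take $\mathrm{A}$ small so that $\frac{64(1+\gamma)}{(3-\gamma)^4}\mathrm{A}^{3-\gamma}\le N\beta^{-\gamma}f(\beta)$, so $\Psi$ is a super-solution by \eqref{5.1} (at the center, any $C^2$ test function touching from below has vanishing gradient, so the inequality is trivial since $f\ge0$); $u$, being a solution, is a sub-solution; if $\Psi\ge u$ on all of $\partial B_r(x_0)$, Lemma \ref{l2.1} gives $\Psi\ge u$ in $B_r(x_0)$, contradicting $\Psi(x_0)=0<u(x_0)$; hence some $y\in\partial B_r(x_0)$ satisfies $u(y)>\Psi(y)=\mathrm{A}r^{\frac{4}{3-\gamma}}$, which is the claim. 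Note also that the admissible size of $\mathrm{A}$ (hence $c$) inevitably involves $N$ and $f$ (through $f(\beta)$), exactly as in the paper's choice $c\in(0,\Upsilon)$ with $\Upsilon$ from \eqref{3.10}, so the dependence you worried about cannot be reduced to $n$ and $\gamma$ alone by rescaling.
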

\begin{proof}
	Since $u$ is continuous, it is enough to prove the theorem for points $x_0\in\{u>0\}\cap\Omega$. Set
	$$
	v(x):=c|x-x_0|^{\frac{4}{3-\gamma}},
	$$
	with a constant $c\in(0,\Upsilon)$, where $\Upsilon>0$ is defined by \eqref{3.10}. Using \eqref{5.1}, direct computation reveals that the choice of $c$ makes $v$ a viscosity super-solution of \eqref{1.1} in $B_r(x_0)$, where $r>0$ is such that $B_r(x_0)\subset\Omega$. If $v\ge u$ on $\partial B_r(x_0)$, then the comparison principle, Lemma \ref{l2.1}, would imply $v\ge u$ in $B_r(x_0)$, contradicting to the fact that $0=v(x_0)<u(x_0)$. Hence, there is a point $y\in\partial B_r(x_0)$ such that $v(y)<u(y)$. We then estimate
	\[ \sup_{B_r(x_0)} u \geq u(y) \ge v(y) =cr^{\frac{4}{3-\gamma}}.\]	
\end{proof}
As a consequence, we obtain that the free boundary is a porous set, therefore it has Hausdorff dimension strictly less than $n$, hence its Lebesgue measure is zero.

Note that from \eqref{1.2} one has $f(0)=0$, so to use the comparison principle, Lemma \ref{l2.1}, it is enough to assume that $f$ is non-decreasing, that is, \eqref{comparison} holds.
\begin{corollary}\label{c4.1}
	Let \eqref{1.2}, \eqref{comparison} and \eqref{5.1} hold. If $u$ is a bounded non-negative viscosity solution of \eqref{1.1}, then $\partial\{u>0\}$ is a porous set.
\end{corollary}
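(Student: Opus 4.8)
The plan is to play off the two matching one-sided estimates already established: the upper growth bound across the free boundary (Theorem \ref{t3.1}) and the lower non-degeneracy bound (Theorem \ref{t4.1}). All hypotheses needed for the latter are in force here: \eqref{5.1} is assumed and, since \eqref{1.2} forces $f(0)=0$, the monotonicity \eqref{comparison} suffices to invoke the comparison principle used inside the proof of Theorem \ref{t4.1}.

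First I would fix $x_0\in\partial\{u>0\}\cap\Omega$ and $0<r<\dist(x_0,\partial\Omega)$, and apply Theorem \ref{t4.1} on the ball $B_{r/4}(x_0)$, together with continuity of $u$, to produce a point $y\in\overline{B_{r/4}(x_0)}$ with $u(y)=\sup_{\overline{B_{r/4}(x_0)}}u\ge c\,(r/4)^{\frac{4}{3-\gamma}}>0$, where $c=c(n,\gamma)$. Then $y\in\{u>0\}$, so $d:=\dist(y,\partial\{u>0\})>0$, the ball $B_d(y)$ lies in $\{u>0\}$, and $d\le|y-x_0|\le r/4$. Picking $z\in\partial\{u>0\}$ with $|y-z|=d$ — note that $|z-x_0|\le d+r/4\le r/2$, so $z$ stays at distance at least $\tfrac12\dist(x_0,\partial\Omega)$ from $\partial\Omega$ — Theorem \ref{t3.1} applied at $z$ gives $u(y)\le C\,|y-z|^{\frac{4}{3-\gamma}}=C\,d^{\frac{4}{3-\gamma}}$, with $C$ depending only on $\gamma$, $\|u\|_\infty$ and $\dist(x_0,\partial\Omega)$.

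Combining the two bounds yields $d\ge\sigma r$ for $\sigma:=\tfrac14\,(c/C)^{\frac{3-\gamma}{4}}$, which I may further shrink so that $\sigma\le 3/4$. Since $|y-x_0|\le r/4$, the ball $B_{\sigma r}(y)$ lies in $B_{3r/4}(y)\subset B_r(x_0)$; and since $\sigma r\le d$, it lies in $B_d(y)\subset\{u>0\}$. Hence $B_{\sigma r}(y)\subset B_r(x_0)\setminus\partial\{u>0\}$, which is exactly the defining property in Definition \ref{porosity} with porosity constant $\sigma$.

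The one point requiring care is bookkeeping the dependence of the constant $C$ in Theorem \ref{t3.1} — and therefore of $\sigma$ — on $\dist(x_0,\partial\Omega)$: $\sigma$ is not globally universal but only uniform on compact subsets of $\Omega$, which is all that porosity requires locally and which already yields the Hausdorff-dimension bound $n-c'\sigma^n$ and, in particular, zero Lebesgue measure, as recorded after the statement. I do not anticipate any serious obstacle beyond this bookkeeping: the argument is the classical ``non-degeneracy meets optimal regularity'' dichotomy.
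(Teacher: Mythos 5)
Your proposal is correct and follows essentially the same route as the paper: combine the non-degeneracy bound of Theorem \ref{t4.1} with the growth estimate of Theorem \ref{t3.1} at the maximum point $y$ to deduce $\dist(y,\partial\{u>0\})\ge \sigma r$, and then insert a ball of radius $\sigma r$ inside $B_r(x_0)\cap\{u>0\}$. The only (immaterial) difference is bookkeeping: you apply non-degeneracy in $B_{r/4}(x_0)$ so the ball can be centered at $y$ itself, while the paper works in $B_r(x_0)$ and recenters at an intermediate point $z$ on the segment joining $y$ to $x_0$.
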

\begin{proof}
	Let $x\in\partial\{u>0\}$ and $y\in\overline{B}_r(x)$ be such that
	$$
	u(y)=\sup_{B_{r}(x)}u.
	$$
	By Theorem \ref{t4.1}, $u(y)\ge cr^{\frac{4}{3-\gamma}}$. On the other hand, Theorem \ref{t3.1} provides
	$$
	u(y)\le C\left[d(y)\right]^{\frac{4}{3-\gamma}},
	$$
	where $d(y):=\text{dist}\left({y,\partial\{u>0\}}\right)$. Therefore,
	$$
	\left(\frac{c}{C}\right)^{\frac{3-\gamma}{4}}r\le d(y).
	$$
	Hence, if $\sigma:=\frac{1}{2}\left(\frac{c}{C}\right)^{\frac{3-\gamma}{4}}$, one has $$
	B_{2\sigma r}(y)\subset\{u>0\}.
	$$
	We now choose $\xi\in(0,1)$ such that for the point $z:=\xi y+(1-\xi)x$ we have $|y-z|=\sigma r$. Then
	$$
	B_{\sigma r}(z)\subset B_{{2\sigma}r}(y)\cap B_r(x).
	$$
	Moreover, we have
	$$
	B_{2\sigma r}(y)\cap B_r(x)\subset \{u>0\},
	$$	
	which together with the previous inclusion implies
	$$
	B_{\sigma r}(z)\subset B_{2\sigma r}(y)\cap B_r(x)\subset B_r(x)\setminus\partial\{u>0\},
	$$
	that is, the set $\partial\{u>0\}$ is porous with porosity $\sigma$.
\end{proof}
\begin{corollary}\label{c4.2}
	If \eqref{1.2}, \eqref{comparison}, \eqref{5.1} hold, and $u$ is a viscosity solution of \eqref{1.1}, then Lebesgue measure of the set $\partial\{u>0\}$ is zero.
\end{corollary}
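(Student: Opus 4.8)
The plan is to combine Corollary \ref{c4.1} with the standard fact, recalled right after Definition \ref{porosity}, that a porous set has Hausdorff dimension strictly below $n$ and therefore vanishing Lebesgue measure. The only point requiring care is that Corollary \ref{c4.1} is stated for \emph{bounded} solutions, while here $u$ is merely a continuous viscosity solution on $\Omega$; this is dealt with by a routine localization, since both porosity and being Lebesgue-null are local properties.

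First I would fix an exhaustion of $\Omega$ by balls $B_j:=B_{r_j}(x_j)$ with $\overline{B_j}\subset\Omega$ and $\Omega=\bigcup_{j\in\N}B_j$. On each $\overline{B_j}$ the function $u$ is continuous, hence bounded, and it is a non-negative viscosity solution of \eqref{1.1} in $B_j$, while \eqref{1.2}, \eqref{comparison} and \eqref{5.1} obviously continue to hold. Applying Corollary \ref{c4.1} in $B_j$ produces a porosity constant $\sigma_j>0$ such that $\partial\{u>0\}\cap B_j$ is porous with porosity $\sigma_j$ inside $B_j$. By the property recalled after Definition \ref{porosity} (see \cite{Z88}), a set that is porous with porosity $\sigma_j$ has Hausdorff dimension at most $n-c\sigma_j^{\,n}<n$, with $c>0$ depending only on $n$; in particular $\partial\{u>0\}\cap B_j$ has $n$-dimensional Lebesgue measure zero. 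Finally, since
$$
\partial\{u>0\}\cap\Omega=\bigcup_{j\in\N}\bigl(\partial\{u>0\}\cap B_j\bigr)
$$
is a countable union of Lebesgue-null sets, it is itself Lebesgue-null, which is the assertion.

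There is essentially no obstacle beyond this localization bookkeeping, because the geometric heart of the matter — the uniform interior ball (corkscrew-type) condition obtained by playing Theorem \ref{t4.1} (non-degeneracy from below) against Theorem \ref{t3.1} (growth bound from above) — has already been established in Corollary \ref{c4.1}. The single thing worth keeping an eye on is that the porosity constant $\sigma_j$ furnished by Corollary \ref{c4.1} degenerates as $x_j$ approaches $\partial\Omega$, since the constant $C$ in Theorem \ref{t3.1} depends on $\dist(x_0,\partial\Omega)$ and blows up there; hence one should not expect a uniform porosity constant for $\partial\{u>0\}$ on all of $\Omega$, and the countable-union argument above is precisely what circumvents this.
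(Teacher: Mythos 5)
Your argument is exactly the one the paper intends: Corollary \ref{c4.2} is stated without proof precisely because it follows immediately from Corollary \ref{c4.1} together with the fact, recalled after Definition \ref{porosity} (see \cite{Z88}), that a porous set has Hausdorff dimension at most $n-c\sigma^n<n$ and hence Lebesgue measure zero. Your extra localization by an exhaustion of $\Omega$, taking a countable union of null sets to handle local boundedness and the non-uniformity of the porosity constant near $\partial\Omega$, is a correct and welcome piece of bookkeeping, but it does not change the approach.
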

\section{The borderline case}\label{s5}
Although, in general, one cannot expect more than $C^{1,\alpha}$ regularity for viscosity solutions of \eqref{1.1}, Theorem \ref{t3.1} provides higher and higher regularity across the free boundary, as $\gamma\in[0,3)$ gets closer to 3. In this section we analyze the limit case of $\gamma=3$. The scaling property of the operator plays an essential role here, as $\gamma=3$ is also the degree of homogeneity of the infinity Laplacian, meaning that $\Delta_\infty(Cu)=C^3\Delta_\infty u$, for any constant $C$. Observe that Theorem \ref{t3.1} cannot be applied directly, since the estimates deteriorate as $\gamma\to3$. Thus, in this section \eqref{1.2} is substituted by
\begin{equation}\label{6.1}
0\le f(\delta t)\le M\delta^3f(t),
\end{equation}
with $M>0$, $t>0$ bounded and $\delta>0$ small. Our first observation states as follows.
\begin{lemma}\label{l5.1}
	If $u$ is a non-negative viscosity solution of \eqref{1.1}, where $f$ satisfies \eqref{6.1}, then its every zero is of infinite order.
\end{lemma}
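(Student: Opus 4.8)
The plan is to show that at any zero $x_0$ of $u$ (which, being non-negative, is a local minimum), the solution vanishes faster than any power of $|x-x_0|$, by iterating the rescaling/flattening mechanism of Lemma \ref{l2.2} in the critical exponent $\gamma=3$ and exploiting the fact that in this case the homogeneity of the operator exactly matches the growth bound \eqref{6.1}. Without loss of generality assume $x_0=0$ and $B_1\subset\Omega$. After a preliminary normalization replacing $u$ by $\tau u(\rho\,\cdot\,)$ for suitable small $\tau,\rho>0$, I may assume $u\in[0,1]$ in $B_1$, $u(0)=0$, and $\Delta_\infty u=\kappa^4\,\widetilde f(u)$ in $B_1$ with $\|\widetilde f\|_\infty\le\max\{1,M\}\sup_{[0,1]}f$ and $\kappa\le\kappa_\mu$, where $\mu$ will be chosen.

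Fix an arbitrary $\lambda\in(0,1)$; I will show $\sup_{B_r}u\le C_\lambda\,r^{1/\lambda}$ near the origin, which, $\lambda$ being arbitrarily small, forces infinite-order vanishing. Take $\mu=2^{-1/\lambda}$ and let $\kappa_\mu$ come from Lemma \ref{l2.2}. Apply Lemma \ref{l2.2} to $u$ to get $\sup_{B_{1/2}}u\le 2^{-1/\lambda}$. Define $u_1(x):=2^{1/\lambda}u(x/2)$; then $u_1(0)=0$, $u_1\in[0,1]$, and, using the scaling $\Delta_\infty(Cu)=C^3\Delta_\infty u$ together with \eqref{6.1},
\begin{equation*}
\Delta_\infty u_1(x)=2^{3/\lambda}2^{-4}\kappa^4\,\widetilde f\!\bigl(2^{-1/\lambda}u_1(x)\bigr)\le 2^{-4}\kappa^4 M\,\widetilde f(u_1(x)),
\end{equation*}
so $u_1$ again solves an equation of the same type with the constant $\kappa$ replaced by a smaller one, hence Lemma \ref{l2.2} applies to $u_1$ as well. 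The crucial point, and the place where $\gamma=3$ is used in an essential way, is that the factor $2^{3/\lambda}$ produced by rescaling the solution is exactly cancelled (indeed, over-cancelled, by $2^{-4}$) thanks to the cubic homogeneity; for $\gamma<3$ this cancellation fails and the iteration constants blow up, which is precisely why Theorem \ref{t3.1} degenerates as $\gamma\to3$. Iterating, one builds $u_k(x):=2^{k/\lambda}u(2^{-k}x)$ satisfying the same structural hypotheses with ever-smaller coupling constants, and Lemma \ref{l2.2} yields $\sup_{B_{1/2}}u_k\le 2^{-1/\lambda}$ for every $k$, i.e. $\sup_{B_{2^{-(k+1)}}}u\le 2^{-(k+1)/\lambda}$.

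From this dyadic decay one obtains, for $0<r\le\frac12$ and $k$ with $2^{-(k+1)}<r\le 2^{-k}$, the bound $\sup_{B_r}u\le 2^{-k/\lambda}\le 2^{1/\lambda}r^{1/\lambda}$; undoing the initial normalization gives $\sup_{B_r}u\le C_\lambda r^{1/\lambda}$ for $r$ small, with $C_\lambda$ depending on $\lambda,\|u\|_\infty,\dist(0,\partial\Omega)$. Since $\lambda\in(0,1)$ was arbitrary, $u$ vanishes to infinite order at $0$, which is the claim. The main obstacle to watch is the bookkeeping of the rescaled right-hand sides: one must check that at each step the new coupling constant stays $\le\kappa_\mu$ (it does, since it only decreases) and that the rescaled source still obeys the $L^\infty$ bound required by Lemma \ref{l2.2}; the repeated use of \eqref{6.1} with $\delta=2^{-1/\lambda}$, possibly several times if $k/\lambda$ is not an integer multiple of the range where \eqref{6.1} is assumed valid, needs to be handled exactly as in the proof of Theorem \ref{t3.1}, absorbing finitely many initial steps into the constant.
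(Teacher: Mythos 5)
Your argument is correct, but it takes a more self-contained route than the paper. The paper proves this lemma as a two-line corollary of Theorem \ref{t3.1}: rewriting \eqref{6.1} as $f(\delta t)\le (M\delta^{\beta})\,\delta^{3-\beta}f(t)\le M\delta^{3-\beta}f(t)$ for small $\delta$, the hypothesis \eqref{1.2} holds with exponent $3-\beta$ for every $\beta>0$, so Theorem \ref{t3.1} gives $u(x)\le C_{\beta}|x-z|^{4/\beta}$ near any zero $z$, and since $\beta>0$ is arbitrary the zero has infinite order. You instead keep the critical exponent and rerun the flattening iteration of Lemma \ref{l2.2} directly, choosing the amplitude factor $2^{1/\lambda}$ freely and observing that the cubic homogeneity of $\Delta_\infty$ combined with \eqref{6.1} cancels it exactly (with a factor $2^{-4k}$ to spare), so that any polynomial decay rate $1/\lambda$ propagates. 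Both proofs rest on the same machinery (Lemma \ref{l2.2} plus dyadic rescaling); the paper's is shorter because the iteration was already carried out in Theorem \ref{t3.1}, while yours makes transparent why the borderline case yields arbitrarily fast decay instead of a fixed exponent. Two small repairs to your bookkeeping: first, Lemma \ref{l2.2} is stated for an equation, so at step $k$ you should write $\Delta_\infty u_k=\kappa_k^4 g_k(u_k)$ with $\kappa_k:=2^{-k}\kappa$ and $g_k(s):=2^{3k/\lambda}\widetilde f\bigl(2^{-k/\lambda}s\bigr)$, using \eqref{6.1} only to bound $\|g_k\|_\infty$ (this is exactly how Theorem \ref{t3.1} is argued, as you indicate); second, since \eqref{6.1} is only assumed for $\delta$ below some threshold $\delta_0$ and your induction is sequential (step $k$ needs the conclusion of step $k-1$ to ensure $u_k\in[0,1]$), you cannot simply absorb finitely many initial steps into the constant; instead restrict to $\lambda$ small enough that $2^{-1/\lambda}\le\delta_0$, which costs nothing because only arbitrarily small $\lambda$ is needed for the conclusion.
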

\begin{proof}
	This is a consequence of Theorem \ref{t3.1}. To see that it is enough to rewrite \eqref{1.2}, for $\gamma=3$, as
	$$
	f(\delta t)\leq M_\delta\delta^{3-\beta}f(t),
	$$
	where $M_\delta:=M\delta^\beta$ and $\beta>0$. An application of Theorem \ref{t3.1} with $M=M_\delta$ leads to the conclusion that if $u(z)=0$ for $z\in\Omega$, then $D^nu(z)=0$, $\forall n\in\mathbb{N}$.
\end{proof}
Furthermore, we show that if a non-negative viscosity solution of \eqref{1.1} vanishes at a point, then it must vanish everywhere. For $f\equiv0$ this follows from the Harnack inequality. The particular case, when $f$ is homogeneous of degree three, that is, $f(t)=M t^3$, was studied in \cite{ALT16}, where by means of a suitable barrier function, was concluded that if non-negative viscosity solution vanishes in an inner point, then it has to vanish everywhere. Unlike \cite{ALT16}, our function $f$ is not given explicitly, which makes the construction of a suitable barrier function more complicated. Observe that once \eqref{6.1} holds, then $f(0)=0$, hence $\inf f=0$, so to use the comparison principle, one needs to assume that $f$ is non-decreasing.
\begin{theorem}\label{t5.1} Let $u$ be a non-negative viscosity solution of \eqref{1.1}, where $f$ satisfies \eqref{comparison} and \eqref{6.1}. If $\{u=0\}\cap\Omega\neq\emptyset$, then $u\equiv0$.
\end{theorem}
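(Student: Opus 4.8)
The plan is to argue by contradiction, using a Hopf-type barrier --- the sub-solution with non-degenerate gradient alluded to in the introduction --- localized at a free boundary point, and then playing the linear lower bound it provides against the super-linear decay of Theorem~\ref{t3.1}. Assume $\Omega$ is connected (otherwise argue on each component) and suppose $u\not\equiv0$, so that $\{u>0\}$ and $\{u=0\}\cap\Omega$ are both non-empty. Walking along a path in $\Omega$ from a point of $\{u>0\}$ towards a point of $\{u=0\}$, one finds $x_1\in\{u>0\}$ with $d:=\dist(x_1,\{u=0\})<\dist(x_1,\partial\Omega)$; then $\overline{B_d(x_1)}\subset\Omega$, $B_d(x_1)\subset\{u>0\}$, and there is $y_0\in\partial B_d(x_1)$ with $u(y_0)=0$. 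Put $x_2:=\frac12(x_1+y_0)$, $\rho:=d/2$, and $A:=B_\rho(x_2)\setminus\overline{B_{\rho/2}(x_2)}$. Then $\overline{A}\subset\overline{B_d(x_1)}\subset\Omega$, the only point of $\overline{B_\rho(x_2)}$ on $\partial B_d(x_1)$ is $y_0$, so $\overline{A}\setminus\{y_0\}\subset\{u>0\}$ and in particular $m:=\min_{\partial B_{\rho/2}(x_2)}u>0$. Also $y_0\in\partial\{u>0\}\cap\Omega$, and since \eqref{6.1} implies \eqref{1.2} with $\gamma=0$ (as $\delta^3\le1$ for small $\delta$), Theorem~\ref{t3.1}, applied on a small ball around $y_0$, gives $C>0$ with $u(x)\le C|x-y_0|^{4/3}$ for $x$ near $y_0$; in particular $u$ vanishes faster than linearly at $y_0$ (see also Lemma~\ref{l5.1}).

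The decisive step is the barrier. From \eqref{comparison} and \eqref{6.1} one gets $f(0)=0$, and, fixing $t_0>0$ within the range in \eqref{6.1}, the bound $f(\delta t_0)\le M\delta^3 f(t_0)$ for small $\delta$; writing $s=\delta t_0$ this yields $f(s)\le\Lambda s^3$ for $0\le s\le s_0$, with $\Lambda:=Mf(t_0)t_0^{-3}\ge0$ and some $s_0>0$. Set $b:=\max\{\Lambda^{1/4},1\}$ and define, on $\overline{A}$,
\[
w(x):=A\bigl(e^{-b|x-x_2|}-e^{-b\rho}\bigr),
\]
with $A>0$ so small that $A\bigl(e^{-b\rho/2}-e^{-b\rho}\bigr)\le\min\{m,s_0\}$. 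Then $w\in C^\infty(\overline{A})$, $0\le w\le s_0$, $w\equiv0$ on $\partial B_\rho(x_2)$, $w\le m$ on $\partial B_{\rho/2}(x_2)$, and $|\nabla w|=Abe^{-b|x-x_2|}\ge Abe^{-b\rho}>0$ throughout $\overline{A}$, which is the announced sub-solution with modulus of gradient bounded away from zero. Writing $w=g(r)$, $r=|x-x_2|$, and using $\Delta_\infty w=(g')^2g''$ for radial functions, a direct computation gives
\[
\Delta_\infty w=A^3b^4e^{-3br}\ \ge\ \Lambda A^3e^{-3br}\ \ge\ \Lambda w^3\ \ge\ f(w)\qquad\text{in }A,
\]
since $0\le e^{-br}-e^{-b\rho}\le e^{-br}$ for $r\le\rho$ and $\Lambda\le b^4$. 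Hence $w$ is a classical, and therefore viscosity, sub-solution of \eqref{1.1} in $A$.

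Since $u$ is a viscosity super-solution of \eqref{1.1}, $w$ a sub-solution, $\overline{A}\subset\Omega$, $f$ non-decreasing, and $w\le u$ on $\partial A=\partial B_\rho(x_2)\cup\partial B_{\rho/2}(x_2)$ (on the outer sphere $w=0\le u$, on the inner one $w\le m\le u$), the comparison principle, Lemma~\ref{l2.1}, gives $u\ge w$ in $A$. Setting $\nu:=(y_0-x_2)/\rho$, for small $t>0$ the point $y_0-t\nu$ lies in $A$, so
\[
u(y_0-t\nu)\ \ge\ w(y_0-t\nu)\ =\ A\bigl(e^{-b(\rho-t)}-e^{-b\rho}\bigr)\ =\ Abe^{-b\rho}\,t+O(t^2),
\]
whence $\liminf_{t\to0^+}u(y_0-t\nu)/t\ge Abe^{-b\rho}>0$; but the estimate above forces $u(y_0-t\nu)\le Ct^{4/3}=o(t)$, a contradiction. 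Therefore $u\equiv0$.

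The main obstacle is exactly this barrier construction: in \cite{ALT16} the source $f(t)=Mt^3$ is explicit, so a barrier can be written by hand, whereas here $f$ is controlled only through \eqref{6.1}, and one needs a sub-solution of $\Delta_\infty w\ge f(w)$ whose radial slope does not degenerate at the touching sphere $\partial B_\rho(x_2)$, so that comparison yields a \emph{linear} lower bound for $u$ at $y_0$. What makes this work is that \eqref{6.1} amounts near the origin to $f(s)\le\Lambda s^3$, with the exponent $3$ matching the homogeneity of $\Delta_\infty$, so that the convex decreasing exponential profile $g(r)=A(e^{-br}-e^{-b\rho})$ is admissible as soon as $b^4\ge\Lambda$; the contradiction is then closed against the super-linear decay $u(x)\le C|x-y_0|^{4/3}$ of Theorem~\ref{t3.1} (equivalently, the infinite-order vanishing of Lemma~\ref{l5.1}).
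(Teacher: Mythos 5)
Your proof is correct, and while it follows the same overall skeleton as the paper (construct a sub-solution with non-degenerate slope, compare with $u$, and contradict the decay furnished by Theorem~\ref{t3.1} at a free boundary point), the key technical step --- the barrier --- is done by a genuinely different and more elementary route. The paper takes an infinity sub-harmonic function $w$ with $|\nabla w|\ge\eta$ (a non-explicit object, obtained as a limit of $p$-harmonic functions via \cite{J93}), composes it with $g(t)=e^t+t$ to force $\Delta_\infty v\ge\mu\eta\ge Mf(v)$, and then exploits the cubic scaling in \eqref{6.1} to turn $\delta v$ into a sub-solution, comparing on the whole ball $B_r(y)$ and contradicting a $d^4$ decay (Theorem~\ref{t3.1} applied after rewriting \eqref{6.1} with exponent $2$). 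You instead observe that \eqref{6.1}, read with $t=t_0$ fixed, is precisely the pointwise bound $f(s)\le\Lambda s^3$ for small $s$, which reduces the situation to the explicit cubic case of \cite{ALT16}: an explicit radial exponential Hopf barrier $A\bigl(e^{-b r}-e^{-b\rho}\bigr)$ on an annulus internally tangent to $\{u=0\}$ then works as soon as $b^4\ge\Lambda$ and the amplitude is small, and the contradiction is closed against the weaker $C^{4/3}$ decay (Theorem~\ref{t3.1} with $\gamma=0$), which already beats the linear lower bound. What each approach buys: yours is self-contained (no existence theory for infinity sub-harmonic functions with gradient bounded away from zero, no composition-and-rescaling step), localizes the comparison to an annulus where the boundary inequality $w\le u$ is immediate, and makes transparent that the only input from \eqref{6.1} is the cubic upper bound near zero; the paper's construction avoids explicit computations and showcases the scaling structure ($\delta v$ remains a sub-solution), which is the mechanism it emphasizes for the borderline exponent. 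Two cosmetic points: you use the letter $A$ both for the annulus and for the barrier's amplitude, which should be disambiguated; and the connectedness of $\Omega$ that you invoke explicitly is also needed (implicitly) in the paper's proof, so it is not an extra hypothesis of your argument.
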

\begin{proof}
	We argue by contradiction, assuming that there is $x\in\Omega$ such that $u(x)=0$, but $u(y)>0$ for a point $y\in\Omega$. Without loss of generality we may assume that
	$$
	r:=\dist\left(y,\{u=0\}\right)<\frac{1}{10}\dist\left(y,\partial\Omega\right).
	$$
	We aim to construct a sub-solution of \eqref{1.1} which stays below $u$ on $\partial B_r(y)$.
	
	Let $w$ be an infinity sub-harmonic function in $B_r(y)$ such that $|\nabla w|\ge\eta$ for $\eta\ge0$ constant to be chosen later. Such function can be built up as a limit, as $p\to\infty$, of $p$-super-harmonic functions with modulus of gradient separated from zero by $\eta$. We refer the reader for details to \cite{J93}. 

    Now if $g$ is a smooth function and $v=g(w)$, direct computation reveals that
    $$
    \Delta_{\infty} v=\left[g'(w)\right]^3\Delta_{\infty} w+\left[g^{\prime}(w)\right]^{2} g^{\prime \prime}(w)|\nabla w|^{4}.
    $$
    Thus, for $g(t)=e^t+t$,
    \begin{equation}\label{5.2}
    \Delta_{\infty} v\ge\left[g^{\prime}(w)\right]^{2} g^{\prime \prime}(w)|\nabla w|^{4},
    \end{equation}
    since $g^\prime\ge1$ and $\Delta_\infty w\ge0$. Also $g^{\prime\prime}\ge e^{-\|w\|_\infty}>0$, and \eqref{5.2} yields (recall that $|\nabla w|\ge\eta$)
    \begin{equation}\label{5.3}
    	\Delta_\infty v\ge\mu\eta,
    \end{equation}
    where $\mu:=e^{-\|w\|_\infty}>0$. Choosing
    $$
    \eta>\frac{M}{\mu}\max_{[0,\|v\|_\infty]}f,
    $$
    from \eqref{5.3} we obtain
    $$
    \Delta_\infty v-Mf(v)\ge\Delta_\infty v-\mu\eta\ge0,
    $$
    i.e., $v$ is a sub-solution of \eqref{1.1}. The latter together with \eqref{1.2} gives, for any small constant $\delta>0$,
	$$
	\Delta_\infty\left(\delta v\right)-f\left(\delta v\right)\ge\delta^3\left(\Delta_\infty v-Mf(v)\right)\ge0,
	$$	
	that is, the function $\delta v$ is also a sub-solution of \eqref{1.1}. We choose $\delta>0$ small enough to guarantee
	$$
		\delta v(x)\le u(x),\,\,\,x\in\partial B_r(y),
	$$
	and by the comparison principle, Lemma \ref{l2.1},
	\begin{equation}\label{5.4}
		\delta v(x)\le u(x),\,\,\,x\in B_r(y).
	\end{equation}
    Observe, that writing \eqref{1.2} as
    $$
    f(\delta t)\le M\delta\delta^2f(t),
    $$
    and applying Theorem \ref{t3.1} with $\widetilde{M}=M\delta$, we arrive at
    \begin{equation}\label{5.5}
    	\sup_{B_d(z)}u\le Cd^4,
    \end{equation}
    where $z\in\partial B_r(y)\cap\partial\{u>0\}$, and $d>0$ is small. In fact, we choose $0<d<\left(\frac{\delta\eta}{4C}\right)^\frac{1}{3}$. Using the fact that $|\nabla v|=g^\prime|\nabla w|\ge\eta$, recalling \eqref{5.4} and \eqref{5.5} and the choice of $d$, we estimate
    $$
    \delta\eta d\le\sup_{B_d(z)}\delta|v(x)-v(z)|\le\sup_{B_d(z)}\delta v\le\sup_{B_d(z)}u\le Cd^4\le\frac{1}{4}\delta\eta d,
    $$
    which is a contradiction.
\end{proof}

\section{Examples and beyond}\label{s7}
We close the paper with some examples of the source term, for which our results are valid. We start with the following remark.

\begin{remark}\label{r7.1}
	The results in this paper remain true, without changes in the proofs, when in \eqref{1.1} the right hand side has continuous (bounded) coefficients, i.e., 
	$$
	\Delta_\infty u=f(x,u),
	$$ 
	as long as $f(x,u)$ satisfies \eqref{1.2} (and \eqref{comparison}, \eqref{6.1} when needed) as a function of $u$.
\end{remark}

Note that $f\equiv0$ satisfies \eqref{1.2}, \eqref{comparison}, \eqref{5.1}, therefore, our results resemble those for the non-negative infinity harmonic functions. Also, the results are true when in \eqref{1.1} one has $f(t)=t^\gamma$, $t\ge0$, $\gamma\in[0,3)$ (studied in \cite{ALT16}). In view of Remark \ref{r7.1}, we can allow some coefficients too, as long as they remain bounded, that is, functions of the type $f(x,t)=g(x)t^\gamma$, where $g$ is a non-negative continuous, bounded function. When in the last example $\gamma=0$, i.e., the source term depends only on $x$, $f(x,t)=g(x)$, across the free boundary one has $C^{\frac{4}{3}}$ regularity, once $g(x)\ge0$ is continuous and bounded. Furthermore, the non-degeneracy result is true, and hence the free boundary is a porous set and has zero Lebesgue measure.

An example of the source term, not constructed via power functions is $f(t):=e^t-1$, which satisfies \eqref{1.2} with $M=1$ and $\gamma=0$, since $e^{\delta t}<e^t$, for $\delta>0$ small and $t>0$. Hence, applying Theorem \ref{t3.1} to
$$
\Delta_\infty u=e^u-1,
$$
we conclude that non-negative viscosity solutions of the above equation are $C^\frac{4}{3}$ near the free boundary $\partial\{u>0\}$. Moreover, if $u$ is an entire solution of the last equation, which vanishes at a point and
$$
u(x)=o\left(|x|^\frac{4}{3}\right),\,\,\,\textrm{ as }\,\,\,|x|\rightarrow\infty,
$$
then Theorem \ref{t3.2} implies that it has to be identically zero.

Same conclusion can be made when in \eqref{1.1} the source term is $f(t):=\log (t^2+1)$. Of course, Theorem \ref{t3.1} can be applied also for any linear combination (with continuous, bounded coefficients) of the above source terms. In fact, Theorem \ref{t3.1} is true for any non-negative continuous function, which is non-decreasing around zero and vanishes at the origin. We point out that \eqref{1.2} (as well as \eqref{5.1} and \eqref{6.1}) is required to hold only around zero, so Theorem \ref{t3.1} is true also for source terms that are any of the above examples around zero and can be anything outside, while remaining non-negative and continuous (and in case of Theorem \ref{t4.1} and its consequences, also non-decreasing).

We finish with an application of Theorem \ref{t5.1}. Let $u$ be a non-negative viscosity solution of
\begin{equation}\label{7.1}
	\Delta_\infty u=\log\left(1+u^3\right).
\end{equation}
Since $f(t)=\log(1+t^3)$ satisfies \eqref{1.2} with $M=1$ and $\gamma=0$, Theorem \ref{t3.2} implies $C^{\frac{4}{3}}$ regularity of $u$ near the touching ground. On the other hand, the function $f(t)$ can be written as 
$$
\log\left(1+t^3\right)=t^3\frac{\log\left(1+t^3\right)}{t^3}:=t^3g(t).
$$ 
Set $g(0)=1$. Then $g$ is continuous, bounded ($0\le g\le1$) function, and the non-decreasing function $f(t)=g(t)t^3$ satisfies \eqref{6.1} with $M=1$. Applying Theorem \ref{t5.1}, we conclude that if $u$ is a non-negative viscosity solution of \eqref{7.1}, which is zero at a point, then it must be identically zero.

\bigskip

\noindent{\bf Acknowledgments.} NMLD was partially supported by Instituto Federal de Educa\c{c}\~ao, Ci\^encia e Tecnologia do Rio Grande do Sul. NMLD thanks the Analysis group at Centre for Mathematics of the University of Coimbra (CMUC) for fostering a pleasant and productive scientific atmosphere during his postdoctoral program. RT was partially supported by FCT -- Funda\c c\~ao para a Ci\^encia e a Tecnologia, I.P., through projects PTDC/MAT-PUR/28686/2017 and UTAP-EXPL/MAT/0017/2017, and by CMUC -- UID/MAT/00324/2013, funded by the Portuguese government through FCT and co-funded by the European Regional Development Fund through Partnership Agreement PT2020.

\end{document}